\def\EE{\mathbb E}
\def\cE{\mathcal E}
\def\cH{\mathcal H}
\def\cG{\mathcal G}
\def\cH{\mathcal H}
\def\cH{\mathcal{G}}
\def\({\left(}
\def\){\right)}  
\def\<{\langle}
\def\>{\rangle}
\let\epsilon\varepsilon
\let\log\log
\newcommand{\hm}[1]{\leavevmode{\marginpar{\tiny%
$\hbox to 0mm{\hspace*{-0.5mm}$\leftarrow$\hss}%
\vcenter{\vrule depth 0.1mm height 0.1mm width \the\marginparwidth}%
\hbox to 0mm{\hss$\rightarrow$\hspace*{-0.5mm}}$\\\relax\raggedright #1}}}
\def\({\left(}
\def\){\right)}
\def\:{\colon}
\def\[{\left[}
\def\]{\right]}
\def\cH{\mathcal H}
\let\epsilon\varepsilon
\DeclareMathAlphabet{\Sss}{U}{bbmss}{m}{n}
\newcommand{\ignore}[1]{{}}
\newtheoremstyle{note}
  {4pt}
  {4pt}
  {\sl}
  {}
  {\bfseries}
  {.}
  {.5em}
  {}
\newtheoremstyle{introthms}
  {3pt}
  {3pt}
  {\normalfont}
  {}
  {\bfseries}
  {.}
  {.5em}
  {\thmnote{#3}}
\newtheoremstyle{cases}
  {2pt}
  {2pt}
  {\rm}
  {}
  {\bfseries}
  {.}
  {.3em}
  {}
\theoremstyle{plain}
\newtheorem{theorem}              {Theorem}       
\newtheorem{claim}      [theorem] {Claim}         
\newtheorem{lemma}      [theorem] {Lemma}         
\newtheorem{corollary}  [theorem] {Corollary}     
\theoremstyle{cases}
\theoremstyle{introthms}
\theoremstyle{note} 
\newtheorem{remark}    [theorem]   {Remark}       
\newtheorem{definition}[theorem]   {Definition}
\newcommand*\patchAmsMathEnvironmentForLineno[1]{%
\expandafter\let\csname old#1\expandafter\endcsname\csname #1\endcsname
\expandafter\let\csname oldend#1\expandafter\endcsname\csname end#1\endcsname
\renewenvironment{#1}%
{\linenomath\csname old#1\endcsname}%
{\csname oldend#1\endcsname\endlinenomath}}%
\newcommand*\patchBothAmsMathEnvironmentsForLineno[1]{%
\patchAmsMathEnvironmentForLineno{#1}%
\patchAmsMathEnvironmentForLineno{#1*}}%
\title[AKPSS for non-unif]{%
The independence number of non-uniform uncrowded hypergraphs and an anti-Ramsey type result }
\author[S.~J.~Lee]{Sang June Lee}
\address{Department of Mathematics\\ Duksung Women's University, Seoul 01369, South Korea}
\email{sanglee242@duksung.ac.kr, sjlee242@gmail.com}
\author[H.~Lefmann]{Hanno Lefmann}
\address{Fakult\"at f\"ur Informatik,
  TU Chemnitz, 
 D-09107 Chemnitz, Germany}
\email{lefmann@informatik.tu-chemnitz.de}
\thanks{}
\date{\today, \currenttime}
\begin{document}



\shortdate
\settimeformat{ampmtime}


\begin{abstract} We prove the following: Fix an integer $k\geq 2$, and let $T$ be a real number with $T\geq 1.5$. Let $\cH=(V,\cE_2\cup \cE_3\cup\dots\cup\cE_k)$ be a non-uniform hypergraph with the vertex set $V$ and the set $\cE_i$ of edges of size $i=2,\ldots , k$.
Suppose that $\cH$ has no $2$-cycles (regardless of sizes of edges),
 and  neither contains $3$-cycles nor $4$-cycles consisting of $2$-element edges. If the average degrees
$t_i^{i-1} := i  |\cE_i|/ |V|$ 
 satisfy that 
  $t_i^{i-1}  \leq 
T^{i-1}  (\ln T)^{\frac{k-i}{k-1}}$ for $i= 2, \dots , k$, then there exists a constant $C_k > 0$, depending only on $k$, such that
$\alpha(\cH)\geq C_k    \frac{|V|}{T}  (\ln T)^{\frac{1}{k-1}}$, where $\alpha(\cH)$ denotes the independence number of $\cH$. This extends results of   Ajtai, Koml\'os, Pintz, Spencer and Szemer\'edi~
[J. Comb. Theory Ser. A 32, 1982, 321--335] 
   and Duke, R\"odl and the second author~
 [Random Struct. Algorithms 6, 1995, 209--212] for uniform hypergraphs.

 As an application, we consider an anti-Ramsey type problem on non-uniform hypergraphs. Let $\cH=\cH(n;2,\ldots,\ell)$ be the hypergraph on the $n$-vertex set $V$ in which, for $s=2,\ldots,\ell$, each $s$-subset of $V$ is a hyperedge of $\cH$. Let $\Delta$ be an edge-coloring of $\cH$ satisfying the following:
(a) two hyperedges sharing a vertex have different colors; 
(b) two hyperedges with distinct size have different colors;
(c) a color used for a hyperedge of size $s$ appears at most $u_s$ times.
For such a coloring $\Delta$, let $f_{\Delta}(n;u_2,\ldots,u_{\ell})$ be the maximum size of a  subset $U$ of $V$ such that each hyperedge of $\cH[U]$ has a distinct color, and let
 $f(n;u_2,\ldots,u_{\ell}):=\min_{\Delta} f_{\Delta}(n;u_2,\ldots,u_{\ell}).$
We determine $f(n;u_2,\ldots,u_{\ell})$ up to a multiplicative logarithm factor, which is a non-uniform version of a result for edge-colorings of graphs by Babai~
[Graphs Comb. 1, 1985, 23--28], and for uniform hypergraphs by Alon, R\"odl and the second author~
[Coll.\ Math.\ Soc.\ J\'anos Bolyai, 60.\ Sets, Graphs
and Numbers, 1991, 9--22] and by R\"odl, Wysocka and  the second author~
 [J. Comb. Theory Ser. A 74, 1996, 209--248].

\end{abstract}

\maketitle

\section{Introduction}

Let $\cH = (V, \cE)$ be a hypergraph with its vertex set $V$ and its edge set $\cE$. Let $\cE_i\subset \cE$ be the set of all $i$-element edges in $\cH$.
 For a vertex $v \in V$, let $d_i(v)$
denote the number of $i$-element edges $E \in \cE_i$ containing $v$.
A hypergraph $\cH = (V, \cE)$  is called \emph{$k$-uniform} if 
$\cE=\cE_k$. A subset $V' \subseteq V$ is called \emph{independent} if for
 no edge $E \in \cE$ it is $E \subseteq V'$.
The \emph{independence number}
$\alpha (\cH)$ of $\cH$
 is the maximum size of an independent set of $\cH$. For a subset
$V^* \subset V$ of the vertex set, let $\cH [V^*]$ be the subhypergraph of $\cH$ induced on $V^*$.

The independence number has been well-studied for uniform hypergraphs,
however,  for  non-uniform hypergraphs it was not that
studied correspondingly. The goal of this paper is to extend known results on the independence number of a uniform hypergraph to a non-uniform hypergraph.

Tur\'an's theorems by Tur\'an~~\cite{turan} and Spencer~~\cite{Sp72} imply the following theorem about the independence number of a $k$-uniform hypergraph $\cH$. 

\begin{theorem}[Tur\'an~~\cite{turan} and Spencer~~\cite{Sp72}] \label{turan.2} Let $k\geq 2$ be an integer.
Let $\cH =(V, \cE_k)$ be a $k$-uniform hypergraph with $N$ vertices and 
average degree
$t^{k-1} := k  |\cE_k|/N$, where $t \geq 1$.
Then, 
\begin{eqnarray} \label{stern1}
\alpha (\cH) \geq \frac{k-1}{k}  \frac{N}{t} \, .
\end{eqnarray}
\end{theorem}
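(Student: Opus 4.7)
The plan is to use the probabilistic deletion method, which is arguably the cleanest way to obtain the sharp constant $(k-1)/k$. First, I would form a random subset $W\subseteq V$ by including each vertex independently with probability $p\in(0,1]$, a parameter to be fixed at the end. By linearity of expectation, $\EE[|W|]=pN$ and the expected number of edges $E\in\cE_k$ with $E\subseteq W$ equals $p^{k}|\cE_k| = p^{k}\,t^{k-1}\,N/k$.

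Next, I would turn $W$ into an independent set $W'$ by removing one vertex from each edge that lies entirely in $W$. Then $W'$ is independent in $\cH$ by construction, and $|W'|\geq |W|-\#\{E\in\cE_k:E\subseteq W\}$, whence
\[
\EE[|W'|] \;\geq\; pN \;-\; \frac{p^{k}\,t^{k-1}\,N}{k}.
\]
Some realization attains at least this expectation, so $\alpha(\cH)$ is bounded below by the right-hand side.

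Finally, I would optimize over $p$. Elementary calculus shows that the right-hand side is maximized at $p=1/t$, which is a valid probability exactly because of the hypothesis $t\geq 1$. Substituting this choice yields
\[
\alpha(\cH) \;\geq\; \frac{N}{t} \;-\; \frac{N}{k\,t} \;=\; \frac{k-1}{k}\cdot\frac{N}{t},
\]
as desired.

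The only delicate point in the whole argument is the admissibility of the choice $p=1/t$, which is precisely why $t\geq 1$ appears in the hypotheses. Beyond this, no genuine obstacle arises; the Tur\'an--Spencer bound is in fact the prototypical application of the probabilistic deletion method, and the same template will reappear (in considerably more elaborate form, and combined with the semi-random/Rödl nibble) in the non-uniform generalization that is the main target of the paper.
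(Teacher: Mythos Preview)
Your proof is correct and is the standard probabilistic deletion argument due to Spencer. Note, however, that the paper does not actually prove Theorem~\ref{turan.2}: it is simply quoted as a known result attributed to Tur\'an and Spencer. The closest thing the paper contains is the sketch proof of the non-uniform analogue, Theorem~\ref{turan.2.non}, which uses precisely the same deletion method you describe; there the probability is set to $p=1/(2T)$ with $T=\max_i t_i$, a slightly cruder choice (yielding the constant $1/4$ rather than $(k-1)/k$) because a single $p$ must handle edges of all sizes $2,\dots,k$ simultaneously. So your argument is in complete agreement with the paper's approach to the surrounding material, and your optimization over $p$ recovers the sharp constant in the uniform case.
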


Later, a better lower bound on the independence number of a $k$-uniform hypergraph was obtained if the hypergraph does not contain cycles of small lengths. We introduce the definition of cycles of a given length.

\begin{definition} Let $\cH = (V, \cE)$ be a hypergraph.
A \emph{$j$-cycle} in $\cH$ is a family of pairwise
distinct edges $E_1, \dots , E_j \in \cE$ such that the following hold:
\begin{itemize}
\item $E_i 
\cap E_{i+1} \neq \emptyset$ for $i = 1, \dots , j-1$ and
$E_j \cap E_1 \neq \emptyset$.
\item There are pairwise distinct 
vertices $v_1, \dots, v_{j}$ such that $v_{i} \in E_{i} \cap E_{i+1}$ for 
$i = 1, \dots ,
j-1$ and $v_j \in E_j \cap E_1$.
\end{itemize}
\end{definition}

\begin{definition}
 A hypergraph $\cH$ 
is called  \emph{uncrowded} if it does not contain any $2$-, $3$-
or $4$-cycles.
  A hypergraph $\cH$ is called \emph{linear} if  it does not contain any $2$-cycles.
\end{definition}

Ajtai, Koml\'os, Pintz, Spencer and Szemer\'edi~~\cite{AKPSS82} obtained a lower bound on the independence number of a $k$-uniform uncrowded hypergraph as follows. Later, Bertram-Kretzberg and Lefmann~~\cite{BLsoda} and Fundia~~\cite{Fu96} provided  
a deterministic polynomial time algorithm.

\begin{theorem}[Ajtai, Koml\'os, Pintz, Spencer and Szemer\'edi~~\cite{AKPSS82}] \label{theo.uncrow}
Let $k \geq 2$ be a fixed integer. Let $t$ and $N$ satisfy $t > t_0(k)$ 
and $ N > N_0(k,t)$.
 Let $\cH = (V, \cE_k)$ be an uncrowded $k$-uniform 
hypergraph on $N$ vertices with average degree
$t^{k-1} := k  |\cE_k|/ N$. 
Then, there exists a constant $C_k > 0$ such that
\begin{eqnarray} \label{spe_szem}
\alpha (\cH) \geq
C_k    \frac{N}{t}  (\ln t)^{\frac{1}{k-1}}  \; .
\end{eqnarray}
\end{theorem}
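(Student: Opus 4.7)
My plan is to follow the classical semi-random (nibble) strategy that underlies the AKPSS argument. First, set $p := c_k\,(\ln t)^{1/(k-1)}/t$ for a small constant $c_k > 0$, and select each vertex of $V$ independently with probability $p$ to obtain a random subset $W \subseteq V$. Then $\EE[|W|] = pN$, and the expected number of hyperedges of $\cH$ contained in $W$ equals
\[
   p^k |\cE_k| \;=\; (pN)\cdot \frac{(pt)^{k-1}}{k} \;=\; (pN)\cdot \frac{c_k^{k-1}\ln t}{k},
\]
so that $\cH[W]$ has average degree on the order of $c_k^{k-1}\ln t$. This is precisely the threshold where Theorem~\ref{turan.2} just fails to yield the target bound and where the uncrowdedness must be exploited.

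Second, inside $W$ I would run a random-greedy pruning procedure: order the vertices of $W$ uniformly at random, process them in order, and place the current vertex $v$ into a growing set $I$ whenever this does not complete a hyperedge already supported on $I$. Track the pair $(L(\tau), m(\tau))$, where $L(\tau)$ is the number of vertices still alive and $m(\tau)$ is the number of still-active edges after processing a fraction $\tau\in[0,1]$ of the vertices. Using the differential equation method, derive ODEs for the expected trajectories of $L$ and $m$, and couple this with an Azuma or Talagrand type concentration inequality to conclude that with high probability $|I|=(1-o(1))|W|$.

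The uncrowded hypothesis enters decisively in the concentration step: linearity (no $2$-cycles) ensures that any two hyperedges meet in at most one vertex, so that a single vertex death removes only $O(1)$ edges through each of its neighbors; the absence of $3$- and $4$-cycles then rules out the clustered configurations that would produce large fluctuations in the vertex-death rate. Together, these structural constraints force the martingale increments to be small in the right sense, and thereby drive the logarithmic improvement over Theorem~\ref{turan.2}. Equivalently, it is exactly this codegree control that bypasses the Tur\'an-Spencer lower bound.

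The main obstacle is precisely this concentration analysis for general $k$: for $k=2$ the argument collapses to the classical Ajtai-Koml\'os-Szemer\'edi analysis of triangle-free (in fact girth-$5$) graphs, but for $k\ge 3$ the bookkeeping of the codegree-type quantities that arise becomes substantially more delicate. An alternative route is induction on $k$: associate to each vertex $v$ its link $L_v=\{E\setminus\{v\}:v\in E\in\cE_k\}$, which is a $(k-1)$-uniform uncrowded hypergraph, and propagate the bound from $k-1$ to $k$ via a second random selection that treats the links. In either case, the conclusion is that with positive probability the greedy procedure produces an independent set $I$ of size at least $C_k N(\ln t)^{1/(k-1)}/t$, establishing~\eqref{spe_szem}.
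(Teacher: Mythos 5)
Your overall instinct (a semi-random / nibble argument) is the right family of ideas, but your plan fails at its central claim. After sparsifying with $p = c_k(\ln t)^{1/(k-1)}/t$, the induced hypergraph $\cH[W]$ has about $pN$ vertices and average degree $(pt)^{k-1} = c_k^{k-1}\ln t$, and you claim the random greedy pruning leaves an independent set of size $(1-o(1))|W|$. That cannot be true, and in fact no procedure can succeed from this point: take for $\cH$ the standard extremal example, a random $k$-uniform hypergraph of average degree $t^{k-1}$ with its few short cycles destroyed. Conditioned on $W$, the hypergraph $\cH[W]$ is again a random-like (essentially uncrowded) hypergraph of average degree $\Theta(\ln t)$, and the first-moment upper bound shows its independence number is $O\big(|W|\,(\ln\ln t)^{1/(k-1)}/(\ln t)^{1/(k-1)}\big)=o(|W|)$, i.e.\ $O\big(N(\ln\ln t)^{1/(k-1)}/t\big)$ in absolute terms. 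So any independent set found inside $W$ --- by greedy pruning, the differential equation method, or otherwise --- falls short of the target $C_k N(\ln t)^{1/(k-1)}/t$ by a factor of order $(\ln t/\ln\ln t)^{1/(k-1)}$. The one-shot ``sparsify to average degree $\Theta(\ln t)$ and then keep almost all of $W$'' strategy discards exactly the gain the theorem is about, and uncrowdedness cannot rescue it because the extremal example is itself uncrowded after minor surgery. Your fallback sketch via links also does not help: in a linear hypergraph the link of a vertex is a matching of $(k-1)$-sets, so it carries no structure on which to base an induction on $k$.

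The argument the paper relies on (following AKPSS) is genuinely iterative rather than one-shot: Lemma~\ref{lemma1} is applied over roughly $0.01\ln T$ rounds $r=s,\dots,0.01\ln T$; in round $r$ one extracts an independent set $I_r$ of size about $\frac{0.99}{e}\,w_r\,n_r/t_r$ with $w_r=(r+1)^{1/(k-1)}-r^{1/(k-1)}$, passes to an induced subhypergraph on a $\approx 1/e$ fraction of the vertices, and re-establishes degree bounds of the form $d_i(v)\le\binom{k-1}{i-1}(r+1)^{(k-i)/(k-1)}t_{r+1}^{i-1}$ before the next round. The factor $(\ln t)^{1/(k-1)}$ arises only from the telescoping sum $\sum_r w_r$ over all rounds; no single randomization at one fixed density produces it. (A single-pass analysis of the random greedy algorithm run directly on $\cH$, with no prior sparsification, can recover the bound, but that is a much more delicate analysis than what you sketched and is not what you proposed.) Note also that the paper does not reprove Theorem~\ref{theo.uncrow}: it quotes it, and uses the AKPSS iteration lemma (Lemma~\ref{lemma1}) together with a degree-regularization step to prove the non-uniform analogue, Lemma~\ref{lem.uncrow.new}.
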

We remark that in~~\cite{AKPSS82} the constant $C_k$
 is bounded from below by
$C_k \geq \frac{0.98}{e}    10^{-\frac{5}{k-1}}$.

Duke, Lefmann, and R\"odl in~~\cite{DLR95} weakened the assumption in Theorem~\ref{theo.uncrow} for $k\geq 3$ as follows:
\begin{theorem}[Duke, Lefmann, and R\"odl~~\cite{DLR95}] \label{theo.linear}
Let $k \geq 3$ be a fixed integer. Let $t$ and $N$ satisfy $t > t_0(k)$ 
and $ N > N_0(k,t)$.
 Let $\cH = (V, \cE_k)$ be a linear $k$-uniform 
hypergraph on $N$ vertices with  average degree
$t^{k-1} := k  |\cE_k|/ N$. 
Then, there exists a constant $C'_k > 0$ such that
\begin{eqnarray} \label{spe_szem1}
\alpha (\cH) \geq
C'_k    \frac{N}{t}  (\ln t)^{\frac{1}{k-1}}  \; .
\end{eqnarray}
\end{theorem}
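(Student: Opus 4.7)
The plan is to adapt the semi-random (R\"odl nibble) argument used by Ajtai, Koml\'os, Pintz, Spencer and Szemer\'edi for Theorem~\ref{theo.uncrow}, and to show that when $k\geq 3$ the no-$3$-cycle and no-$4$-cycle conditions imposed there can be dispensed with: linearity alone suffices to control the relevant second moments.

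First I would perform a standard preprocessing step, iteratively removing vertices of degree much larger than $t^{k-1}$; by the handshake relation this deletes only an $O(1)$ fraction of vertices, so we may assume every degree is $O(t^{k-1})$. The nibble then proceeds in $r = \Theta(\ln t)$ rounds. At round $i$, given a residual hypergraph $\cH_i$ on $N_i$ vertices with average degree at most $t_i^{k-1}$, sample each vertex independently with probability $p_i = \epsilon / t_i$ to form a set $W_i$, and let $I_i \subseteq W_i$ consist of those sampled vertices lying in no edge fully sampled into $W_i$. Passing to the subhypergraph induced on the unsampled vertices (minus the closed neighborhoods of $I_i$) yields $\cH_{i+1}$ with suitably updated parameters $(N_{i+1}, t_{i+1})$, and the output $I = \bigcup_{i=1}^r I_i$ is automatically independent in the original $\cH$.

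The heart of the argument is a one-round concentration estimate: with high probability one has $|I_i| \geq \tfrac{1}{2}\, p_i N_i\, e^{-\epsilon^{k-1}/k}$, and the new average degree satisfies $t_{i+1}^{k-1} \leq t_i^{k-1} e^{-\epsilon^{k-1}}$. Expectations are immediate from independence; the issue is concentration. In a linear $k$-uniform hypergraph any two distinct edges share at most one vertex, so the variance of the count $X$ of edges fully contained in $W_i$ equals $\EE[X] + O\bigl(N_i\, t_i^{2(k-1)}\, p_i^{2k-1}\bigr)$, the error term arising from ordered edge pairs meeting in exactly one vertex. For $k \geq 3$ and $p_i = \epsilon/t_i$ this error is dominated by $(\EE X)^2$, so Chebyshev (or a bounded-difference martingale inequality) yields concentration. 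An analogous moment computation bounds the concentration of degrees in $\cH_{i+1}$, again using only that pairwise edge intersections have size at most one. It is precisely here that the $k=2$ case would require the triangle-free and $C_4$-free hypotheses, whereas for $k \geq 3$ the higher codimension of edge intersections makes those conditions superfluous.

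Iterating the one-round bound and summing, $|I| = \sum_i |I_i|$ is bounded below by a telescoping sum; the recursion for $(N_i, t_i)$ is arranged so that $r = \Theta(\ln t)$ rounds fit before $t_i$ drops to $O(1)$, and the accumulated contribution is of order $(N/t)(\ln t)^{1/(k-1)}$ after the change of variables relating $t_i$ to $t$. The main obstacle I anticipate is maintaining concentration uniformly across all $r$ rounds as parameters deteriorate: a Talagrand- or Azuma-type inequality is needed in place of Chebyshev so that a union bound over rounds succeeds, and the recursion for $t_{i+1}$ must be shown to remain valid even conditioned on the good events of prior rounds. Once that is in hand the conclusion $\alpha(\cH) \geq C'_k\, (N/t)(\ln t)^{1/(k-1)}$ follows.
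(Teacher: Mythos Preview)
The paper does not prove Theorem~\ref{theo.linear} directly---it is quoted from~\cite{DLR95}---but the proof of the non-uniform analogue, Lemma~\ref{lem.linear.new} in Section~\ref{sec:linear with large}, specializes to the uniform case and shows the intended argument, which is quite different from yours. Rather than reopening the nibble, the paper uses Theorem~\ref{theo.uncrow} as a black box and reduces to it by a single round of random sparsification: cap the maximum degree; use linearity to bound the \emph{total} number of $3$- and $4$-cycles (the last edge of a short cycle is pinned down by two already-chosen vertices, giving $O(Nt^{2(k-1)})$ and $O(Nt^{3(k-1)})$ respectively); sample each vertex independently with probability $p=t^{-1+\varepsilon}$; observe that for $k\ge 3$ the expected number of surviving short cycles is $o(Np)$, since a $3$-cycle in a linear $k$-uniform hypergraph spans $3(k-1)$ vertices and $t^{2(k-1)}p^{3(k-1)-1}\to 0$ exactly when $k\ge 3$; delete one vertex from each surviving cycle to get an uncrowded subhypergraph on $\Theta(Np)$ vertices with average degree $O((t^{\varepsilon})^{k-1})$; and apply Theorem~\ref{theo.uncrow} there.

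Your plan---to rerun the AKPSS iteration and argue that for $k\ge 3$ linearity already controls all the second moments---is a different route and is plausible in spirit, but your sketch glosses over the harder half. The edge-count variance you wrote down is fine, but the one-round lemma also demands the per-vertex degree bound (condition~(iv) of Lemma~\ref{lemma1}), and the variance of $d_i^*(v)$ is governed by pairs of edges through $v$ whose survival indicators are correlated precisely via $3$- and $4$-cycles through $v$; it is not automatic that linearity alone tames this without a careful reworking of the AKPSS lemma. The sparsification-then-black-box approach sidesteps that analysis entirely, which is why the~\cite{DLR95} argument is so short.
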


In this paper we extend  Theorems 
\ref{theo.uncrow} and~\ref{theo.linear} to
non-uniform hypergraphs as follows. 

\begin{theorem} \label{theo.main-new}
Let $ k\geq 2$ be a fixed integer.  Let $T$ be a real number with $T\geq 1.5$ and $N$ be a positive integer.  Let $\cH = (V, \cE_2 \cup \dots \cup \cE_k)$
 be a linear
hypergraph on $N$ vertices such that  there are no $3$-cycles and $4$-cycles consisting of $2$-element edges.  Let the average degrees
$t_i^{i-1} := i  |\cE_i|/ N$ 
 satisfy that  for $i= 2, \dots , k$
  \begin{equation}\label{eq:main_condition}
  t_i^{i-1}  \leq  
T^{i-1}  (\ln T)^{\frac{k-i}{k-1}}.
\end{equation}
Then, there exists a constant $C_k > 0$ such that
\begin{eqnarray} \label{szem-new}
\alpha (\cH) \geq
C_k    \frac{N}{T}  (\ln T)^{\frac{1}{k-1}}  \; .
\end{eqnarray}
\end{theorem}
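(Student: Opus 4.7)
My plan is to generalize the probabilistic semi-random method of Ajtai--Koml\'os--Pintz--Spencer--Szemer\'edi (Theorem~\ref{theo.uncrow}), as refined by Duke, Lefmann and R\"odl (Theorem~\ref{theo.linear}), to the non-uniform setting. Since the target independent set has size of order $N(\ln T)^{1/(k-1)}/T$, the natural basic sampling probability is $p := c_k (\ln T)^{1/(k-1)}/T$ for a suitably small constant $c_k > 0$ depending only on $k$.

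The first step is to form a random subset $W \subseteq V$ by including each vertex independently with probability $p$; then $\EE|W| = pN$ already matches the claimed bound up to the constant $c_k$. For each $i \in \{2,\ldots,k\}$, the hypothesis (\ref{eq:main_condition}) gives
$$\EE\bigl|\{E \in \cE_i : E \subseteq W\}\bigr| \;=\; |\cE_i|p^i \;=\; \tfrac{N}{i}t_i^{i-1}p^i \;\le\; \tfrac{c_k^i N}{iT}(\ln T)^{k/(k-1)}.$$
A naive ``delete one vertex per surviving edge'' strategy does not close, since the total deletion count exceeds $|W|$; I therefore invoke the AKPSS-type survival analysis. For each vertex $v \in W$ one proves a lower bound on the probability that $v$ survives a carefully orchestrated random deletion process, expressed as a product of essentially independent contributions over the edges containing $v$. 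Linearity (no $2$-cycles, so any two vertices lie in at most one edge of each size) controls the pairwise codegree structure, while the absence of $3$- and $4$-cycles of $2$-edges -- the AKPSS uncrowded hypothesis applied to the graph part -- supplies the extra cancellation that delivers the $(\ln T)^{1/(k-1)}$ boost already present in Theorem~\ref{theo.uncrow}.

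The exponent $(k-i)/(k-1)$ in the hypothesis $t_i^{i-1} \le T^{i-1}(\ln T)^{(k-i)/(k-1)}$ is calibrated precisely so that, after substituting $p = c_k(\ln T)^{1/(k-1)}/T$, each edge size contributes a term of the same order to the per-vertex survival probability; summing the resulting geometric series in $c_k$ gives a bounded total, which can be absorbed by choosing $c_k$ small enough. Each $v \in W$ then survives with probability at least, say, $\tfrac{1}{2}$, yielding an independent set of expected size $\ge \tfrac{1}{2}pN$, as required. The main technical obstacle will be the simultaneous second-moment analysis across all edge sizes: in the uniform case one only tracks codegrees of pairs of $k$-edges (controlled by uncrowdedness), while here one must bound the joint structures formed by an $i$-edge and a $j$-edge for every $2 \le i,j \le k$. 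The given cycle hypotheses are exactly those needed to control every such pair, but verifying that the AKPSS/DLR nibble iteration still runs, and still delivers the same $(\ln T)^{1/(k-1)}$ gain when driven by a non-uniform edge distribution, is the principal source of complication.
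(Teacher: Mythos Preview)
Your proposal identifies the right ingredients (AKPSS/DLR, the calibration of the exponents in \eqref{eq:main_condition}) but remains a plan rather than a proof, and it diverges from the paper's route in a way that leaves a genuine gap unaddressed. The paper does \emph{not} run an AKPSS nibble directly on the linear hypergraph $\cH$. Instead it proceeds by a two-stage reduction: first (Lemma~\ref{lem.linear.new}) it samples vertices with probability $p=T^{-1+\varepsilon}$ (not the target density), counts the surviving $3$- and $4$-cycles in the induced subhypergraph, and shows that---because every such cycle contains at least one edge of size $\geq 3$ (the purely $2$-edge cycles being excluded by hypothesis)---the expected number of surviving cycles is $o(pN)$. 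Deleting one vertex per surviving cycle yields an \emph{uncrowded} non-uniform subhypergraph, to which the paper then applies an AKPSS-type iteration (Lemma~\ref{lem.uncrow.new}, driven by Lemma~\ref{lemma1}) as a black box. A final rescaling of $T$ and a disjoint-copies trick remove the auxiliary constants $c_i$ and the largeness assumptions on $T$ and $N$.

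The gap in your plan is precisely the step the paper's reduction is designed to handle: you assert that ``the given cycle hypotheses are exactly those needed to control every such pair,'' but the hypotheses forbid only $3$- and $4$-cycles made entirely of $2$-edges, while $\cH$ may contain arbitrarily many $3$- and $4$-cycles involving larger edges. The AKPSS survival/concentration analysis requires genuine uncrowdedness, and you give no mechanism for eliminating the mixed-size cycles before invoking it. Separately, your single sample at density $p\sim (\ln T)^{1/(k-1)}/T$ cannot by itself deliver the $(\ln T)^{1/(k-1)}$ gain---as your own computation shows, the expected number of surviving edges exceeds $|W|$ by a factor of $\ln T$; the logarithm must be manufactured by iterated nibbling, not by a single sample, and your proposal does not explain how that iteration is to be organized in the non-uniform \emph{linear} (as opposed to uncrowded) setting.
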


 \begin{remark}   For the range of $0<T<1.5$ in Theorem~\ref{theo.main-new},
a simple greedy algorithm gives that there exists a constant $C'_k>0$, depending only on $k$, such that
$\alpha(\cH)\geq C'_kN.$
\end{remark}

As an application of the main theorem (Theorem~\ref{theo.main-new}), we consider a anti-Ramsey type  problem of a non-uniform hypergraph.
Let $\cH=\cH(n;2,\ldots,\ell)$ be the hypergraph on the vertex set $V$, $|V|=n$, in which, for $s=2,\ldots,\ell$, each $s$-subset of $V$ is a hyperedge of $\cH$. Suppose that $ \Delta$ is an edge-coloring of $\cH$ satisfying the following conditions:
\begin{enumerate}
\item Two hyperedges sharing a vertex have different colors. In other words, each color class is a \emph{matching}.
\item Two hyperedges with distinct size have different colors.
\item The coloring is \emph{$(u_2,\ldots,u_{\ell})$-bounded}, that is, a color used for a hyperedge of size $s$ appears at most $u_s$ times.
\end{enumerate}

For such a coloring $ \Delta$, let $f_{\Delta}(n;u_2,\ldots,u_{\ell})$ be the maximum size of a vertex set $U$ in $V$ such that the subhypergraph $\cH[U]$ of $\cH$ induced on $U$ is \emph{totally multicolored}, which means that 
 each hyperedge of $\cH[U]$ has a distinct color. Let
 $$f(n;u_2,\ldots,u_{\ell}):=\min_{\Delta} f_{\Delta}(n;u_2,\ldots,u_{\ell}),$$
 where we minimize it over all edge-colorings $\Delta$   of $\cH$ satisfying (a)--(c).
 
  We will show the following  which is a non-uniform version of a result for edge-colorings of graphs in~~\cite{LRW96}.

\begin{theorem}\label{thm:application_a} $\phantom{ }$ 
\begin{enumerate}
\item Suppose that 
\begin{equation}\label{cond1_a}\max_{2\leq i\leq \ell }\[\(n^{i-1}u_i\)^{\frac{1}{2i-1}}\]=\(n^{s-1}u_s\)^{\frac{1}{2s-1}}.
\end{equation}
Then, there exist positive constants $c_1$ and $c_2$, depending only on $\ell$, such that for every sufficiently large $n$,
\begin{equation}\label{eq:result_a} c_1 \(\frac{n^s}{u_s}\)^{\frac{1}{2s-1}} \leq f(n;u_2,...,u_{\ell})\leq c_2 \(\frac{n^s}{u_s}\ln n\)^{\frac{1}{2s-1}}.
\end{equation}

\item Suppose that 
\begin{equation}\label{cond2_a}
	 \max_{2\leq i\leq \ell }\[\(\frac{n^{i-1}u_i} {\ln n}\)^{\frac{1}{2i-1}}\]=\(\frac{n^{s-1}u_s}{\ln n}\)^{\frac{1}{2s-1}}
\end{equation}
 and
\begin{equation}\label{cond3_a}
		u_s \geq n^{1/2 + \varepsilon} \hskip 2em \mbox{ for some absolute constant $\varepsilon > 0$}. 
\end{equation}
Then, there exist positive constants  $c_3=c_3(\ell, \epsilon)$ and $c_4=c_4(\ell)$ such that  for every sufficiently large $n$,
\begin{equation}\label{eq:result_b}c_3  \(\frac{n^s}{u_s}\ln n\)^{\frac{1}{2s-1}}  \leq f(n;u_2,...,u_{\ell}) \leq c_4 \(\frac{n^s}{u_s}\ln n\)^{\frac{1}{2s-1}}.
\end{equation}
\end{enumerate}
\end{theorem}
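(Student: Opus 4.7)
The plan is to reduce the anti-Ramsey problem to a hypergraph independence question and invoke Theorem~\ref{theo.main-new}. Given a coloring $\Delta$ satisfying (a)--(c), I define the \emph{conflict hypergraph} $\cH^{*}=(V,\cE^{*}_{4}\cup\cdots\cup\cE^{*}_{2\ell})$ in which, for every unordered pair of distinct $i$-edges $E_{1},E_{2}$ of the same color (necessarily disjoint by~(a)), the $2i$-set $E_{1}\cup E_{2}$ is placed in $\cE^{*}_{2i}$. A subset $U\subseteq V$ is totally multicolored in $\cH[U]$ if and only if $U$ is independent in $\cH^{*}$, so $f_{\Delta}(n;u_{2},\ldots,u_{\ell})=\alpha(\cH^{*})$. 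Convexity of $\binom{x}{2}$ together with (c) gives $|\cE^{*}_{2i}|\leq\tfrac{1}{2}\binom{n}{i}(u_{i}-1)$, hence the average degrees satisfy
\[
t_{2i}^{2i-1}\;=\;\frac{2i\,|\cE^{*}_{2i}|}{n}\;\leq\;\frac{n^{i-1}u_{i}}{(i-1)!}.
\]
The $3$-cycle and $4$-cycle conditions of Theorem~\ref{theo.main-new} are vacuous for $\cH^{*}$ (it has no $2$-edges), and I would enforce linearity by the standard random restriction: keep each vertex of $V$ independently with an appropriate probability $p$, then delete one vertex from every pair of surviving edges sharing $\geq 2$ vertices. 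Under~(\ref{cond3_a}) this leaves a linear induced subhypergraph on $\Theta(N)$ vertices whose average degrees are of the same order as those of $\cH^{*}$.

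For Part~(1), I set $T:=(n^{s-1}u_{s})^{1/(2s-1)}$. Hypothesis~(\ref{cond1_a}) then yields $t_{2i}^{2i-1}\leq C\,T^{2i-1}$ for every $i$, and even the Tur\'an--Spencer bound (Theorem~\ref{turan.2}) applied to the linearized $\cH^{*}$ gives $\alpha(\cH^{*})\geq c\,N/T=c(n^{s}/u_{s})^{1/(2s-1)}$. For Part~(2) the correct tuning is
\[
T\;:=\;(n^{s-1}u_{s})^{1/(2s-1)}\cdot(\ln n)^{-\frac{2(\ell-s)}{(2s-1)(2\ell-1)}}.
\]
A direct calculation from~(\ref{cond2_a}) verifies $t_{2i}^{2i-1}\leq C\,T^{2i-1}(\ln T)^{(2\ell-2i)/(2\ell-1)}$ for every $i=2,\ldots,\ell$ (the required log-exponent shift turns out to be independent of $i$, which is why a single $T$ works), and~(\ref{cond3_a}) ensures $\ln T=\Theta(\ln n)$. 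Theorem~\ref{theo.main-new} then delivers $\alpha(\cH^{*})\geq C_{k}(N/T)(\ln T)^{1/(2\ell-1)}$, and the log exponents combine via
\[
\frac{2(\ell-s)}{(2s-1)(2\ell-1)}+\frac{1}{2\ell-1}\;=\;\frac{1}{2s-1},
\]
producing the claimed lower bound $c_{3}(n^{s}/u_{s}\cdot\ln n)^{1/(2s-1)}$.

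For the matching upper bound (in both parts), I would exhibit a coloring $\Delta^{*}$ with $f_{\Delta^{*}}(n;u_{2},\ldots,u_{\ell})\leq c_{2}(n^{s}/u_{s}\cdot\ln n)^{1/(2s-1)}$, obtained by randomly partitioning the $s$-edges into $\lceil\binom{n}{s}/u_{s}\rceil$ color classes, each a nearly-perfect matching of size about $u_{s}$ (via a Pippenger--Spencer-type nibble), and coloring edges of every other size in any valid way (e.g.\ singleton color classes). A first-moment estimate of the number of monochromatic $s$-edge pairs inside a fixed $U$, combined with a union bound over the at most $\binom{n}{m}$ subsets of cardinality $m:=c_{2}(n^{s}/u_{s}\cdot\ln n)^{1/(2s-1)}$, shows with positive probability that every such $U$ contains two same-colored $s$-edges, certifying the desired upper bound.

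The main obstacle is the algebraic balancing in Part~(2): one must select $T$ so that Theorem~\ref{theo.main-new}'s tightest constraint (at $i=\ell$, where no $\ln T$ slack is permitted) is just met, while the conclusion's exponent $1/(2\ell-1)$ on $\ln T$ combines with the shifted $T$ to reproduce exactly the target exponent $1/(2s-1)$ on $\ln n$. The random linearization must not erode the degree estimates below what this delicate $T$ demands, and it is precisely hypothesis~(\ref{cond3_a}), that $u_{s}$ is polynomially large, which makes the cleanup succeed without losing the logarithmic factor.
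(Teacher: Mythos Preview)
Your overall architecture matches the paper's: define the conflict hypergraph $\cH^{*}$, use Tur\'an--Spencer for the lower bound in~(a), use Theorem~\ref{theo.main-new} for~(b), and build a random coloring of the $s$-edges for the upper bound. Two points, however, need repair.

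\textbf{Lower bound, part (b): the linearization step is misstated.} The conflict hypergraph $\cH^{*}$ is far from linear; e.g.\ the number of $2$-cycles of type $C(2i,2i,2)$ is of order $u_i n^{2i-1}\gg n$, so one cannot ``delete one vertex from each $2$-cycle'' and retain $\Theta(n)$ vertices. A genuine random restriction with some $p=o(1)$ is required, and after it the $2i$-degree drops by a factor $p^{2i-1}$, contradicting your claim that the restricted average degrees are ``of the same order as those of $\cH^{*}$''. Consequently you cannot apply Theorem~\ref{theo.main-new} to $\cH^{*}$ with your $T$: the theorem must be applied to the restricted hypergraph on $\Theta(np)$ vertices with the rescaled parameter $T'\asymp pT$. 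The paper chooses $p=(n^{s-1}u_s)^{-1/(2s-1)}\omega$ with $\omega=(u_s^2/n)^{1/(2(2s-1)(2\ell+1))}$, so that (i) the surviving $2$-cycle count is $o(np)$ and (ii) $T'\asymp \omega(\ln n)^{(2s-2\ell)/((2s-1)(2\ell-1))}$ is still polynomially large---and it is \emph{here}, not in your statement ``$\ln T=\Theta(\ln n)$'' (which holds automatically for your unrescaled $T$), that hypothesis~\eqref{cond3_a} is actually used. Once this rescaling is done, $(np)/T'=n/T$ and your exponent identity $\tfrac{2(\ell-s)}{(2s-1)(2\ell-1)}+\tfrac{1}{2\ell-1}=\tfrac{1}{2s-1}$ finishes the job exactly as you wrote.

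\textbf{Upper bound: a first moment does not suffice.} Writing $Y_U$ for the number of monochromatic $s$-edge pairs inside a fixed $U$, a first-moment estimate controls $\Pr[Y_U\geq 1]$ from above, not $\Pr[Y_U=0]$; so ``first moment plus union bound over $U$'' cannot show that every $U$ contains a monochromatic pair. You need an exponential upper bound on $\Pr[Y_U=0]$. The paper does not partition via a nibble; it takes $m=c_0 n^s/u_s$ \emph{independent} uniformly random matchings $M_1,\dots,M_m$ of size $u_s$, colors the edges in $M_i\setminus\bigcup_{j<i}M_j$ with color $i$, and then bounds $\Pr[Y_1=\cdots=Y_m=0]$ by a product of conditional probabilities. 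Each factor is handled by lower-bounding $\EE[Y_i\mid B_{i-1}]$ and subtracting $\sum_{j\ge2}\Pr[Y_i\ge j\mid B_{i-1}]$ (a second-moment-type correction), together with a separate estimate showing $|U_m\cap[X]^s|$ stays small. A Janson-type inequality might substitute, but the dependency structure of a random matching (or of a nibble output) makes this nontrivial; the independence of the $M_i$ is exactly what makes the paper's product decomposition work.
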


\begin{remark} 
Statements (a) and (b) in Theorem~\ref{thm:application_a}  have different assumptions. We remark that under asumption~\eqref{cond2_a}, our argument for the lower bound in~\eqref{eq:result_a} gives
	\begin{eqnarray*} \label{se}
c_3'  \(\frac{n^s}{u_s}\)^{\frac{1}{2s-1}}  \( \ln n \)^{\frac{ 1}{2s-1}-\frac{ 1}{2\ell-1}} \leq f(n;u_2,...,u_{\ell}),
\end{eqnarray*}
 which is less than the lower bound in~\eqref{eq:result_b},
while under assumption~\eqref{cond1_a},  our argument  for the lower bound in~\eqref{eq:result_b}
		gives 
\begin{eqnarray*}\label{se2}
c_3''  \(\frac{n^s}{u_s}  \)^{\frac{1}{2s-1}} 
\max\[1, \(\ln \frac{u_s^2}{n}\)^{\frac{1}{2\ell-1}}\] \leq f(n;u_2,...,u_{\ell}),
\end{eqnarray*}
  which is bigger than the lower bound in~\eqref{eq:result_a} if $u_s\gg \sqrt{n}$.
\end{remark}

 If we assume $u_2 = \ldots = u_{\ell} = n$, Theorem~\ref{thm:application_a} (b) immediately implies the following corollary:
\begin{corollary} There exist positive constants $c_3$ and $c_4$, depending only on $\ell$, such that for every sufficiently large $n$,
 \begin{eqnarray}  \label{eq:same_u}
c_3 \(n \ln n\)^{\frac{1}{3}}  &\leq& f(n;\underbrace{n,...,n}_{\ell-1}) \leq c_4 \( n\ln n\)^{\frac{1}{3}}.
 \end{eqnarray}
 \end{corollary}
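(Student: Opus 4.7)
The plan is to specialize Theorem~\ref{thm:application_a}(b) to the case $u_2 = u_3 = \cdots = u_\ell = n$. For this I need to (i) identify the index $s$ realizing the maximum in~\eqref{cond2_a}, (ii) check that hypothesis~\eqref{cond3_a} holds, and (iii) substitute into~\eqref{eq:result_b} to read off the bounds.

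For step (i), with $u_i = n$ the quantity to be maximized becomes
\begin{equation*}
\(\frac{n^{i-1} u_i}{\ln n}\)^{\frac{1}{2i-1}} = \(\frac{n^i}{\ln n}\)^{\frac{1}{2i-1}}.
\end{equation*}
Taking logarithms this equals $\frac{i \ln n - \ln\ln n}{2i-1}$, which for all sufficiently large $n$ is a strictly decreasing function of $i$ on $\{2,3,\ldots,\ell\}$, because the leading term $\frac{i}{2i-1}\ln n = \frac{\ln n}{2 - 1/i}$ decreases in $i$ while the correction $\frac{-\ln\ln n}{2i-1}$ is of lower order. Hence the maximum is attained at $i = 2$, and we may take $s = 2$ in the notation of Theorem~\ref{thm:application_a}. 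Step (ii) is then immediate: $u_s = u_2 = n \geq n^{1/2 + 1/4}$, so~\eqref{cond3_a} holds with, say, $\varepsilon = 1/4$.

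For step (iii), Theorem~\ref{thm:application_a}(b) produces constants $c_3, c_4$ depending only on $\ell$ such that
\begin{equation*}
c_3\(\frac{n^2}{n}\ln n\)^{\frac{1}{3}} \leq f(n; n, \ldots, n) \leq c_4\(\frac{n^2}{n}\ln n\)^{\frac{1}{3}},
\end{equation*}
which simplifies to $c_3(n \ln n)^{1/3} \leq f(n; n, \ldots, n) \leq c_4(n \ln n)^{1/3}$, as claimed. I do not anticipate any substantive obstacle here: the corollary is a direct specialization, and the only point requiring a brief check is verifying that with all $u_i$ equal to $n$ the extremal index in~\eqref{cond2_a} is $s = 2$, which makes the matching two-sided estimate of~\eqref{eq:result_b} collapse to a tight $(n\ln n)^{1/3}$ bound.
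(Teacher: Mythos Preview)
Your proof is correct and follows exactly the approach the paper intends: the paper states that the corollary follows immediately from Theorem~\ref{thm:application_a}(b) with $u_2=\cdots=u_\ell=n$, and you have simply filled in the routine verification that $s=2$ realizes the maximum in~\eqref{cond2_a} and that~\eqref{cond3_a} holds. The only minor remark is that $c_3$ in Theorem~\ref{thm:application_a}(b) depends on both $\ell$ and $\varepsilon$, but since you fix $\varepsilon=1/4$ absolutely, the resulting constant indeed depends only on $\ell$.
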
 
 
 For the special case of $\ell = 2$, i.e.,\ edge-coloings of graphs, the upper bound in~\eqref{eq:same_u} was shown  in 1985 by Babai~~\cite{Ba85}, and the lower bound was proved  in 1991
by Alon, R\"odl and the second author~~\cite{ALR91}.

The organization of this paper is as follows. Section~\ref{sec:turan} contains
 an extension of Theorem~\ref{turan.2} to non-uniform hypergraphs.
 In Section~\ref{sec:uncrowded}, we show an extension of
 Theorem~\ref{theo.uncrow} to non-uniform hypergraphs.
 Section~\ref{sec:linear with large} contains our proof of an extension of
 Theorem~\ref{theo.linear} to non-uniform hypergraphs. In Section~\ref{sec:linear}, we show Theorem~\ref{theo.main-new} in full.  Section~\ref{sec:proof_application} contains the proof of Theorem~\ref{thm:application_a}.


\section{Arbitrary non-uniform hypergraphs}\label{sec:turan}

The next theorem by Spencer~~\cite{Sp72} provides a lower bound on the
independence number $\alpha(\cH)$ of a non-uniform hypergraph $\cH$.

\begin{theorem}[Spencer~~\cite{Sp72}] \label{turan.2.non}
Let $\cH =(V,\cE_2 \cup \dots \cup \cE_k)$
 be a hypergraph with 
$N$ vertices  and
average degree 
$t_i^{i-1} := i  |\cE_i|/N$
for the $i$-element edges, where $i = 2, \dots , k$. 
Let $T := \mbox{ max } \{ t_i \; | \; 2 \leq i \leq k\} \geq 1/2$.
Then,  
$$
\alpha (\cH) \geq \frac{1}{4} \frac{N}{T}. 
$$
\end{theorem}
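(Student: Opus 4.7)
The plan is to apply the classical probabilistic deletion argument. I would pick every vertex of $V$ independently with probability $p$ (to be chosen), let $S \subseteq V$ be the random set so obtained, and then, for each edge $E \in \cE_2 \cup \dots \cup \cE_k$ that happens to satisfy $E \subseteq S$, remove an arbitrary vertex of $E$ from $S$. The resulting set $S'$ contains no edge of $\cH$ and is therefore independent. By construction,
$$|S'| \;\geq\; |S| \;-\; \sum_{i=2}^{k} \bigl|\{E \in \cE_i : E \subseteq S\}\bigr|.$$

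Taking expectations and using $|\cE_i| = t_i^{i-1} N / i \leq T^{i-1} N / i$ (since $t_i \leq T$), I would obtain
$$\EE[|S'|] \;\geq\; pN \;-\; \sum_{i=2}^{k} p^{i} |\cE_i| \;\geq\; \frac{N}{T}\left(pT \;-\; \sum_{i=2}^{k} \frac{(pT)^{i}}{i}\right).$$
The natural choice is $p := 1/(2T)$, which lies in $(0,1]$ precisely because $T \geq 1/2$; this makes $pT = 1/2$. Extending the sum to $i = \infty$ only weakens the bracket, and the closed form $\sum_{i\geq 1} x^{i}/i = -\ln(1-x)$ at $x = 1/2$ yields $\sum_{i\geq 2}(1/2)^i/i = \ln 2 - 1/2$.

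Substituting back gives
$$\EE[|S'|] \;\geq\; \frac{N}{T}\left(\tfrac{1}{2} - \bigl(\ln 2 - \tfrac{1}{2}\bigr)\right) \;=\; \frac{N}{T}(1 - \ln 2) \;>\; \frac{N}{4T},$$
since $1 - \ln 2 \approx 0.307 > 1/4$. Hence some realization produces an independent set of the required size, and $\alpha(\cH) \geq N/(4T)$. There is essentially no technical obstacle: the only points to watch are that the hypothesis $T \geq 1/2$ is exactly what is needed so that $p \leq 1$, and that $T$ is the maximum over \emph{all} edge sizes so that the single bound $t_i \leq T$ applies uniformly in $i$. Any slack in the constant $1/4$ comes purely from the gap between $1 - \ln 2$ and $1/4$.
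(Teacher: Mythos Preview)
Your argument is correct and is essentially identical to the paper's: the same random deletion with $p = 1/(2T)$, the same linearity-of-expectation computation, and the same final step of deleting one vertex per surviving edge. The only cosmetic difference is that the paper stops at the finite sum $\sum_{i=2}^k 1/(i\,2^i)$ and asserts it is at most $1/4$, whereas you extend to the infinite series and evaluate it as $\ln 2 - 1/2$.
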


\begin{proof}
For convenience we provide the sketch of a proof.
Choose each vertex uniformly at random and independently with probability $p := 1/(2  T)\leq 1$. 
Let $V^*$ be the random set of chosen vertices and let
$\cE_i^*$, $i = 2, \dots , k$,
be the sets of $i$-element edges in the induced random subhypergraph 
$\cH[V^*]$. 
Then we have in expectation
\begin{eqnarray*}
\EE\[|V^*|-\sum_{i=2}^k |\cE_i^*|\] &=&
\EE[|V^*|]  - \sum_{i=2}^k \EE[|\cE_i^*|] 
= p  N  - \sum_{i=2}^k \frac{p^i  N  t_i^{i-1}}{i}  \\
&\geq &
p  N  - \sum_{i=2}^k \frac{p^i  N  T^{i-1}}{i} =
\frac{N}{2   T} -  \sum_{i=2}^k 
\frac{1}{i  2^i}  \frac{N}{ T} \geq \frac{N}{4  T}  . 
\end{eqnarray*}
Thus there exists a subset $V^* \subseteq V$
such that 
$$
|V^*|  - \sum_{i=2}^k |\cE_i^*| \geq 
\frac{N}{4  T}.
$$
By deleting one vertex from each edge $E \in \cE_i^*$,
$i=2, \dots , k$, we destroy all edges in $\cH[V^*]$, and hence we obtain an independent set
$V^{**}$ of $\cH$ with $|V^{**}| \geq  N/(4  T)$.
\end{proof}

\section{Uncrowded non-uniform hypergraphs}\label{sec:uncrowded}

In this section we show the following lemma which gives a lower bound on the independence number of an uncrowded non-uniform hypergraph.
\begin{lemma} \label{lem.uncrow.new}
Let $ k\geq 2$ be a fixed integer. Let $T$ and $N$ satisfy $T > T_0(k)$ 
and $ N > N_0(k, T)$.
  Let $\cH = (V, \cE_2 \cup \dots \cup \cE_k)$
 be an uncrowded 
hypergraph on $N$ vertices with average degrees $t_i^{i-1} := i  |\cE_i|/ N$ such that  for $i= 2, \dots , k$ $$t_i^{i-1}  \leq c_i  
T^{i-1}  (\ln T)^{\frac{k-i}{k-1}},
$$
where $c_i$ are constants satisfying $0 < c_i < \frac{1}{{16}k^2}  {k-1 \choose i-1}  
10^{-3 \frac{(k-i)}{k-1}} $.
Then, there exists a constant $C_k > 0$ such that\begin{eqnarray} \label{tu1}
 \alpha (\cH) \geq C_k  \frac{N}{T}  (\ln T)^{\frac{1}{k-1}}.
\end{eqnarray}
\end{lemma}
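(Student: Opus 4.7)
The plan is to extend the AKPSS probabilistic argument of Theorem~\ref{theo.uncrow} to non-uniform hypergraphs, handling all edge sizes $i \in \{2, \ldots, k\}$ simultaneously. First, sample each vertex independently with probability $p := c_0 (\ln T)^{1/(k-1)} / T$ for a small constant $c_0 = c_0(k) > 0$ to be fixed at the end, producing a random subset $V^* \subseteq V$ with $\EE[|V^*|] = pN$. Using the degree hypothesis,
\[
\EE[|\cE_i^*|] \leq p^i \cdot \frac{N t_i^{i-1}}{i} \leq \frac{c_0^i c_i}{i} \cdot \frac{N (\ln T)^{k/(k-1)}}{T},
\]
so $\EE[|\cE_i^*|] / \EE[|V^*|]$ is of order $c_i c_0^{i-1} \ln T$. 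A straight ``one vertex per edge'' deletion as in the proof of Theorem~\ref{turan.2.non} therefore loses a logarithmic factor, and the uncrowded hypothesis must be brought to bear.

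The core step would mimic AKPSS: because $\cH$ has no $2$-, $3$-, or $4$-cycles, the induced hypergraph $\cH[V^*]$ is locally tree-like, so a careful deletion (in the spirit of the semi-random scheme underlying the proof of Theorem~\ref{theo.uncrow}) destroys all surviving edges while removing only a constant fraction of $V^*$. Concretely, after the sampling one would apply this deletion stage to each random edge set $\cE_2^*, \ldots, \cE_k^*$ in turn, and show that with positive probability the resulting set $V^{**} \subseteq V^*$ is independent in $\cH$ and satisfies $|V^{**}| \geq pN/4$, yielding~\eqref{tu1} with $C_k := c_0/4$. Standard concentration (Chernoff on $|V^*|$ and Janson/second-moment control on each $|\cE_i^*|$) would be used to locate such an outcome.

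The main obstacle is the multi-scale bookkeeping: each edge size $i$ contributes its own loss to the deletion, and one must keep the total loss across $i = 2, \ldots, k$ at most $|V^*|/2$. The specific form of the hypothesis $c_i < \frac{1}{16 k^2} \binom{k-1}{i-1} 10^{-3(k-i)/(k-1)}$ is tailored precisely so that the contribution from size $i$ after the AKPSS deletion scales like $c_i c_0^{i-1}$ times a combinatorial factor: the $\binom{k-1}{i-1}$ absorbs the cost of summing losses across the $k-1$ scales, while the geometric $10^{-3(k-i)/(k-1)}$ compensates for the extra logarithmic room $(\ln T)^{(k-i)/(k-1)}$ allowed in the bound on $t_i$ when $i < k$. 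Summing over $i$ then produces a geometric series whose total is bounded independently of $k$; fixing $c_0 = c_0(k)$ small enough closes the argument with an explicit constant $C_k > 0$.
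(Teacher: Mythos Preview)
Your single-shot sampling plan cannot work as stated. After sampling with $p = c_0 (\ln T)^{1/(k-1)}/T$, the induced hypergraph $\cH[V^*]$ has, for each $i$, average $i$-degree
\[
\frac{i\,\EE[|\cE_i^*|]}{\EE[|V^*|]} \;\approx\; p^{i-1} t_i^{i-1} \;\le\; c_i c_0^{i-1}(\ln T)^{\frac{i-1}{k-1}}(\ln T)^{\frac{k-i}{k-1}} \;=\; c_i c_0^{i-1}\ln T,
\]
so every edge size contributes average degree of order $\ln T$. Your claim that a ``careful deletion \ldots\ destroys all surviving edges while removing only a constant fraction of $V^*$'' is therefore false: already for $2$-uniform uncrowded graphs of average degree $d\to\infty$ one has $\alpha = \Theta(N\ln d/d)=o(N)$, not $\Omega(N)$. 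No deletion scheme --- AKPSS-style or otherwise --- can extract an independent set of size $\Omega(|V^*|)$ from a hypergraph whose average degree diverges. The $(\ln T)^{1/(k-1)}$ gain in AKPSS is \emph{produced by} the iteration, not by a cleanup after a single sample; compressing it into one round is exactly what the argument shows cannot be done.

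The paper's proof is structured differently. It first samples with probability $e^{-s}$, $s=10^{-3}\ln T$, and deletes high-degree vertices (Lemma~\ref{lemma2}), obtaining a subhypergraph with \emph{maximum} $i$-degree at most $\binom{k-1}{i-1}s^{(k-i)/(k-1)}(T/e^s)^{i-1}$ --- this is where the constants $\frac{1}{16k^2}\binom{k-1}{i-1}10^{-3(k-i)/(k-1)}$ enter, matching the degree template of Lemma~\ref{lemma1}. It then iterates the AKPSS one-step lemma (Lemma~\ref{lemma1}) for $r=s,\ldots,10^{-2}\ln T$; each round peels off an independent set $I_r$ of size $\gtrsim w_r\, n_r/t_r$ with $w_r=(r+1)^{1/(k-1)}-r^{1/(k-1)}$, and the telescoping sum $\sum_r w_r$ is what yields the factor $(\ln T)^{1/(k-1)}$. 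The non-uniform bookkeeping happens inside Lemma~\ref{lemma1}, which controls all $d_i^*(v)$ simultaneously at each step. Your proposal skips this iterative mechanism entirely, and that is the missing idea.
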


In order to prove Lemma~\ref{lem.uncrow.new}, we will use the following lemma which was used to show Theorem~\ref{theo.uncrow} in Ajtai, Koml\'os, Pintz, Spencer and 
Szemer\'edi~~\cite{AKPSS82}. 
Since the lemma in~~\cite{AKPSS82} has been written densely (with several typos), we provide a slightly modified statement, based on 
Lemma~6 in Fundia~~\cite{Fu96}.
\begin{lemma}[Ajtai, Koml\'os, Pintz, Spencer and 
Szemer\'edi~~\cite{AKPSS82}] \label{lemma1}
Let $k\geq 2$ be a fixed integer.
Let $T$, $N$, and $s$ be such that 
\begin{equation}\label{eq:s}
T\geq T_0(k), \hskip 0.5em N\geq N_0(k,T), \hskip 0.5em 0 \leq s \leq 0.01\ln T.
\end{equation}
Let $n$ and $t$ be integers satisfying
\begin{equation}\label{eq:n,t}
\frac{1}{2}\frac{N}{e^s}\leq n\leq \frac{N}{e^s}  \hskip 1em \mbox{and} \hskip 1em \frac{T}{e^s}\leq t\leq 2\frac{T}{e^s}. 
\end{equation}
Let $\cH = (V, \cE_2 \cup 
\dots \cup \cE_k)$ be an uncrowded hypergraph with $n$ vertices satisfying the following: 
for each vertex $v \in V$ and $2\leq i\leq k$, the numbers $d_i(v)$
of $i$-element edges $E \in\cE_i$ containing vertex $v$ fulfill
$$d_i(v) \leq {k-1 \choose i-1}  s^{\frac{k-i}{k-1}}  t^{i-1}.$$

Then, there exists an independent set $I \subseteq V$
and a vertex set $V^* \subset V$ with $I\cap V^* = \emptyset$ satisfying the following:
\begin{itemize}
\item[(i)]{} $\alpha (\cH) \geq |I|
+ \alpha(\cH^*)$ \hskip 2em where $\cH^*=\cH[V^*]$
\item[(ii)]{} $\displaystyle |I| \geq \frac{0.99}{e}  w_s\frac{n}{t}$ \hskip 2em where $w_s := (s+1)^{\frac{1}{k-1}} - s^{\frac{1}{k-1}}$
\item[(iii)]{} $\displaystyle \frac{n}{e}  (1 - \varepsilon) \leq |V^*| \leq \frac{n}{e} $ \hskip 2em where $\varepsilon=1/(10^6\ln T)$
\item[(iv)]{} for every vertex $v\in V^*$ and $i=2, \ldots , k$ it is
  $$d_i^*(v) \leq {k-1 \choose i-1}
 (s+1)^{\frac{k-i}{k-1}}  \(\frac{t}{e}(1 + \varepsilon)\)^{i-1},$$
where $d_i^*(v)$ is the number of
$i$-element edges in $\cH^*$ containing $v$.
\end{itemize}
\end{lemma}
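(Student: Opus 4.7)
The plan is to prove the lemma by the probabilistic ``semi-random'' method, following the approach of Ajtai, Koml\'os, Pintz, Spencer and Szemer\'edi and adapting it to the non-uniform setting. I would perform a single random experiment and then, by linearity of expectation together with concentration inequalities, exhibit a realization satisfying (i)--(iv) simultaneously.

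The experiment is as follows. For each $v \in V$ independently, assign to $v$ the label $A$ (a candidate for $V^*$) with probability $\alpha := 1/e$, the label $B$ (a candidate for $I$) with probability $\beta := w_s/t$, and ``discard'' otherwise. Form $I \subseteq B$ by deleting one vertex of each edge $E \in \cE$ with $E \subseteq B$ (chosen by any fixed rule); this makes $I$ independent. Form $V^* \subseteq A$ by removing (a) every $v \in A$ that belongs to an edge $E$ with $E \setminus \{v\} \subseteq I$, which ensures that no edge of $\cH$ is contained in $I \cup V^*$, and (b) every $v \in A$ whose induced degree $d_i^*(v)$ exceeds the target in (iv) for some $i$, capping $|V^*|$ at $\lfloor n/e \rfloor$ if necessary. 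With this construction, (i), the upper bound in (iii), and the disjointness $I \cap V^* = \emptyset$ are immediate.

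For (ii), the key estimate is the Poisson-type survival probability
\begin{equation*}
\Pr[v \in I \mid v \in B] = \prod_{E \ni v} \bigl(1 - \beta^{|E|-1}\bigr) \ge (1 - o(1))\exp\Bigl(-\sum_{i=2}^{k} d_i(v)\,\beta^{i-1}\Bigr).
\end{equation*}
The hypothesized degree bound together with the asymptotic $w_s \sim (k-1)^{-1} s^{-(k-2)/(k-1)}$ yields $\sum_{i=2}^{k} d_i(v)\,\beta^{i-1} \le 1 + o(1)$ (each summand contributes at most $\binom{k-1}{i-1}(k-1)^{1-i} s^{2-i}$, and summing over $i$ yields $1+o(1)$), so $\Pr[v \in I \mid v \in B] \ge (1-o(1))/e$ and $\mathbb{E}[|I|] \ge (1-o(1)) w_s n/(e t)$. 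A Janson-type concentration bound, whose applicability rests on the uncrowded hypothesis making the ``$E \subseteq B$'' events nearly independent, then upgrades this to the deterministic bound $0.99\, w_s n/(e t)$ with positive probability. For (iii) and (iv), the removals in step (a) have expected size $O(n\beta) = O(w_s n/t) \ll \varepsilon n/e$; for step (b), the conditional expectation $\mathbb{E}[d_i^*(v) \mid v \in A] \le (1/e)^{i-1} d_i(v)$ lies below the target in (iv) by a factor of $((s+1)/s)^{(k-i)/(k-1)}(1+\varepsilon)^{i-1} > 1$, and a Chernoff bound together with a union bound over the $O(kn)$ pairs $(v, i)$ controls the bad set.

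The main obstacle is the sharp constant $0.99/e$ in (ii), which requires both the Poisson-style survival estimate and a tight concentration bound on the edge-deletion count; both rely on the uncrowded assumption to ensure that the edge-containment events are weakly correlated and that deletions do not cascade. The subtlety specific to non-uniformity is the need to handle edges of sizes $2, \ldots, k$ simultaneously: the interpolating factor $s^{(k-i)/(k-1)}$ in the degree bound is calibrated so that each edge size contributes an $O(1/(k-1))$ amount to the exponent above, summing to precisely the critical value $1$, which in turn yields the $1/e$ survival probability per step and, upon telescoping the iteration in $s$, the correct $(\ln T)^{1/(k-1)}$ factor in the eventual global bound.
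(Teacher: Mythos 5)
The paper does not actually prove this lemma: it is quoted from Ajtai--Koml\'os--Pintz--Spencer--Szemer\'edi, in the form of Fundia's Lemma~6, so your attempt has to stand on its own. As written it has a genuine gap, concentrated in how you obtain property (i) and in the size accounting for property (iii). Your cleanup step (a) removes only those $v\in A$ lying in an edge $E$ with $E\setminus\{v\}\subseteq I$; this does \emph{not} ensure that no edge of $\cH$ is contained in $I\cup V^*$. An edge $E$ with at least one vertex in $I$ and two or more vertices in $V^*$ survives your cleanup, its trace $E\cap V^*$ is not an edge of the induced hypergraph $\cH[V^*]$, and hence a maximum independent set $A'$ of $\cH[V^*]$ may contain all of $E\cap V^*$, making $I\cup A'$ non-independent; so (i) is not established. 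Moreover, the quantitative claim that the step (a) removals have expected size $O(n\beta)$ is wrong already for $2$-element edges: with $\beta=w_s/t$, $\Pr[v\in I]\approx\beta/e$, and $d_2(v)$ allowed to be as large as $(k-1)s^{(k-2)/(k-1)}t\approx t/w_s$, the expected number of $2$-edges with one endpoint in $I$ and the other in $A$ is of order $n d_2\beta/e^2\approx n/e^2$, a constant fraction of $|A|\approx n/e$ (the same happens for $k=2$ with $d_2\le t$ and $|I|\approx n/(et)$). So if you do delete a vertex of $A$ for every such crossing edge (which the patched version of (a) would require, and which larger edges make even worse), you destroy (iii), whose slack is only $\varepsilon n/e$ with $\varepsilon=1/(10^6\ln T)$; and if you do not delete them, (i) is unproven. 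This shows the whole strategy ``forbid every edge meeting both $I$ and $V^*$'' cannot work within the stated bounds: the known arguments must instead couple the choice of $I$ with a single fixed maximum independent set of $\cH[V^*]$ (equivalently, work with the trace hypergraph relative to that one set), so that only the traces hitting that set, not all of $V^*$, constrain $I$.

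Two smaller remarks. First, your exponent computation can be made exact rather than asymptotic: by the binomial theorem, $\sum_{i=2}^{k}\binom{k-1}{i-1}s^{\frac{k-i}{k-1}}w_s^{i-1}=\bigl(s^{\frac{1}{k-1}}+w_s\bigr)^{k-1}-s=(s+1)-s=1$, which is what makes the constant $0.99/e$ attainable for \emph{all} $0\le s\le 0.01\ln T$; your term-by-term bound ``$1+o(1)$'' is only valid for $s\to\infty$ and fails for bounded $s$. Second, the survival events for distinct edges through a fixed $v$ are genuinely independent here because linearity makes the sets $E\setminus\{v\}$ pairwise disjoint, so no FKG or Janson machinery is needed for the expectation bound; concentration of $|I|$, $|V^*|$ and the degrees is a routine second-moment/Chernoff matter given $t\ge T^{0.99}$. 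These parts are fine in spirit; the construction of the pair $(I,V^*)$ compatible with (i) and (iii) simultaneously is the missing idea.
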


 In order to apply Lemma~\ref{lemma1}, we are going to obtain an induced subhypergraph of $\cH$ in Lemma~\ref{lem.uncrow.new} that satisfies the assumptions of Lemma~\ref{lemma1}. 

\begin{lemma} \label{lemma2}
Let $k \geq 2$ be a fixed integer. Let $T$, $N$ and $s$ be positive integers satisfying 
\begin{equation*}
T\geq T_0(k), \hskip 0.5em N\geq N_0(k,T), \hskip 0.5em s= 0.001 \ln T.
\end{equation*}

Let $\cH =(V, \cE_2 \cup \dots \cup \cE_k)$ 
be a hypergraph with $N$ vertices and  average degrees $t_i^{i-1}:=i|\cE_i|/N$ satisfying that
$$t_i^{i-1} \leq c_i  T^{i-1}  (\ln T)^{\frac{k-i}{k-1}},$$
where the constants $c_i > 0$ are such that $c_i < \frac{1}{{16}k^2}  {k-1\choose i-1}
 10^{-3\frac{k-i}{k-1}}$. 

Then, $\cH$ contains a subhypergraph
$\cH^* =(V^*, \cE_2^* \cup \dots \cup \cE_k^*)$ 
induced on $V^*$ with $|V^*|=n$ such that the following hold.
\begin{enumerate}
\item 
$\displaystyle \frac{3}{4}  \frac{N}{e^{s}}  \leq n \leq   \frac{N}{e^{s}}$,
and
\item for each vertex $v \in V^*$, 
\begin{eqnarray} \label{zoo}
d_i^*(v) \leq {k-1 \choose i-1}  s^{\frac{k-i}{k-1}}  
\(\frac{T} {e^s}\)^{i-1},
\end{eqnarray}
where $d_i^*(v)$ is the number of $i$-element edges in $\cH^*$ containing $v$.
\end{enumerate}
\end{lemma}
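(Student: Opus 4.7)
The plan is a standard random-deletion argument. First I would set $p := e^{-s}$ and form a random subset $V' \subseteq V$ by including each vertex independently with probability $p$, so that $\EE[|V'|] = N/e^s$. Call a vertex $v \in V'$ \emph{$i$-bad} if its induced $i$-degree in $\cH[V']$ exceeds $B_i := \binom{k-1}{i-1} s^{(k-i)/(k-1)} (T/e^s)^{i-1}$. The set $V^*$ will be obtained by deleting all bad vertices from $V'$, followed by discarding additional vertices if the size still exceeds $N/e^s$.

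To control the bad vertex count I would use a two-step Markov argument. For any fixed $v$, the expected number of $i$-edges $E \ni v$ contained in $V'$ equals $d_i(v)\, p^i$, and summing over $v$ gives
\begin{equation*}
\EE\!\left[\sum_{v \in V'} d_i^*(v)\right] = N t_i^{i-1} p^i.
\end{equation*}
By Markov's inequality the expected number of $i$-bad vertices is at most $N t_i^{i-1} p^i / B_i$. Using $s = 0.001 \ln T$ (so $\ln T / s = 10^3$) together with the hypothesis on $t_i^{i-1}$, a direct substitution yields
\begin{equation*}
\frac{N t_i^{i-1} p^i}{B_i} \leq \frac{N}{e^s} \cdot \frac{c_i\, 10^{3(k-i)/(k-1)}}{\binom{k-1}{i-1}} < \frac{N}{16 k^2\, e^s},
\end{equation*}
where the last inequality uses $c_i < \tfrac{1}{16k^2}\binom{k-1}{i-1}10^{-3(k-i)/(k-1)}$. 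Summing over $i = 2, \dots, k$, the total expected number of bad vertices is at most $N/(16k \cdot e^s)$, and a second Markov application shows the total bad count is at most $N/(8e^s)$ with probability at least $3/4$.

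For the size of $V'$, since $N \geq N_0(k,T)$ may be taken large relative to $e^s = T^{0.001}$, Chernoff's inequality ensures $|V'| \geq \tfrac{7}{8}\cdot N/e^s$ with probability at least $3/4$. A union bound then produces, with probability at least $1/2 > 0$, a realization of $V'$ satisfying both conditions simultaneously. Removing its bad vertices yields a vertex set of size at least $\tfrac{7}{8}\cdot N/e^s - \tfrac{1}{8}\cdot N/e^s = \tfrac{3}{4}\cdot N/e^s$ in which every vertex satisfies \eqref{zoo}; discarding additional vertices (which only decreases induced degrees) achieves the upper bound $|V^*| \leq N/e^s$.

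There is no deep obstacle --- only careful bookkeeping of constants. The choice $s = 0.001\ln T$ and the bound on $c_i$ are precisely calibrated so that the factor $(\ln T/s)^{(k-i)/(k-1)} = 10^{3(k-i)/(k-1)}$ arising from Markov is absorbed by the compensating $10^{-3(k-i)/(k-1)}$ in $c_i$, leaving the uniform gain $1/(16k^2)$ that makes the union bound succeed for every $i \in \{2,\dots,k\}$ at once.
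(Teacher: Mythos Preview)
Your proposal is correct and follows essentially the same approach as the paper: random sample $V'$ with probability $p=e^{-s}$, control the resulting edge/degree counts via Chernoff and Markov, and delete the small fraction of high-degree vertices. The only organizational difference is that the paper first applies Markov to each edge count $|\cE_i'|$ and then runs a deterministic averaging argument to bound the number of high-degree vertices, whereas you bound the expected number of bad vertices directly and apply Markov once more; the constant bookkeeping and the use of $s=10^{-3}\ln T$ to absorb the $(\ln T/s)^{(k-i)/(k-1)}$ factor are identical.
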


\begin{proof}
Let $V'$ be a random set obtained by choosing each vertex in $\cH$ independently with probability $p:= 1/e^{s}$. For its expected size we 
 have 
$\EE[|V'|] = 
Np=N/e^s$,
thus by Chernoff's inequality it is
$$\mbox{Pr} \big(|V'|\leq \EE[|V'|]-u \big) \leq  e^{-u^2/N}
$$ for every real $u \geq 0$.
 Then, with  
$s = 10^{-3}  \ln T$ and a sufficiently large $N\geq N_0(k,T)$ we have \begin{eqnarray} \label{poi0}
\mbox{Pr}\left(  |V'|\leq \EE[|V'|]-\frac{1}{8}\frac{N}{e^s}
 \right) \leq 
  e^{- \frac{N^2/(64  e^{2s})}{
N}} 
=  e^{- N/(64  e^{2s})
} 
< \frac{1}{k}\, .
\end{eqnarray}
Let $\cH' = (V',
\cE_2' \cup \dots \cup 
\cE_k')$ be the
subhypergraph of $\cH$ induced  on $V'$.
For $i=2, \dots , k$,
we have \begin{equation*} \EE[|\cE_i'|] = p^i  |\cE_i|
= p^i   t_i^{i-1} N/i  
\leq   e^{-si} c_i T^{i-1}  (\ln T)^{\frac{k-i}{k-1}}  N/i.
\end{equation*}
By Markov's inequality we infer
\begin{eqnarray} \label{poi3}
\mbox{Pr} \left( |\cE_i'|  >
k  \EE[|\cE_i'|] \right) \leq 1/k \, .
\end{eqnarray}
By~\eqref{poi0} and~\eqref{poi3},   
there exists an induced 
subhypergraph  $\cH' = (V',
\cE_2' \cup \dots \cup 
\cE_k')$ of $\cH$ such that 
\begin{eqnarray}
|V'| &\geq& \frac{7}{8}  \frac{N}{e^s},  \label{poi4} \\
|\cE_i'| &\leq& k  
  c_i e^{-si}  T^{i-1}  (\ln T)^{\frac{k-i}{k-1}}
 N/i \label{poi5a}
\hskip 1em \mbox{for } i=2, \dots ,k.
\end{eqnarray}

From now on, we are going to obtain an induced subhypergraph $\cH^{*}$ of $\cH'$ such that the maximum degree is bounded. From~\eqref{poi5a}, we infer 
$$\EE[d'_i(v)]\leq k  
 c_i e^{-si}  T^{i-1}  (\ln T)^{\frac{k-i}{k-1}}
 N/|V'|.$$
 Let $Y_i$ be the number of  vertices $v \in V'$ of high degree, i.e.,
 such that $$d_i'(v) 
> 7  k\cdot k    c_i e^{-si}  T^{i-1}
 (\ln T)^{\frac{k-i}{k-1}}  N/|V'|.
$$
Since
\begin{eqnarray*}
 Y_i \cdot  7  k^2    c_i e^{-si}  T^{i-1}
 (\ln T)^{\frac{k-i}{k-1}}  N/|V'| 
 \leq  i  |\cE_i'| 
 \leq k   c_i e^{-si}  T^{i-1} 
 (\ln T)^{\frac{k-i}{k-1}}  N,
\end{eqnarray*}
 we infer
$Y_i \leq |V'|/(7 k)$.  
Thus, the total number of vertices of high degree is bounded as
\begin{equation}\label{eq:Y}
\sum_{i=2}^k Y_i \leq  \sum_{i=2}^k \frac{|V'|}{7k}  \leq 
\frac{|V'|}{7}.
\end{equation}
By deleting these 
vertices of high degree from $V'$, we obtain an induced 
subhypergraph  $\cH^{*} = (V^{*},
\cE_2^{*} \cup \dots \cup 
\cE_k^{*})$ of $\cH^{*}$ with
$|V^{*}| \geq (6/7)  |V'|$ 
 such that with~\eqref{poi4} we infer
\begin{eqnarray}
|V^{*}| &\overset{\eqref{eq:Y}}{\geq}& \frac{6}{7}|V'| \overset{\eqref{poi4}}{\geq} \frac{3}{4}  \frac{N}{e^{s}} \label{eq:V(2)} \\
d_i^{*}(v) &{\leq}&
  7  k^2    c_i e^{-si}  T^{i-1} 
 (\ln T)^{\frac{k-i}{k-1}}  N/|V'| \nonumber  \\
 &\overset{\eqref{poi4}}{\leq}&
 8  k^2 
  c_i  \(\frac{T}{e^s}\)^{i-1} 
 (\ln T)^{\frac{k-i}{k-1}}, \nonumber
\end{eqnarray}
where $d_i^{*}(v) $ is the number of $i$-element edges
in
${\cH}^{*}$ containing $v$.
Recalling $s := 10^{-3}   \ln T$ and the upper bound on $c_i$, we have
$$
d_i^{*}(v)  \leq 
{k-1 \choose i-1}  s^{\frac{k-i}{k-1}}  \(\frac{T}{e^s}\)^{i-1},
$$
which proves condition~(b) in~\eqref{zoo}.
We can obtain the condition $|V^{*}|\leq N/e^s$
by possibly deleting some more vertices from $V^*$. This together with~\eqref{eq:V(2)} implies condition~(a).
\end{proof} 

Now we are ready to prove  Lemma~\ref{lem.uncrow.new}. 
\begin{proof}[Proof of Lemma~\ref{lem.uncrow.new}]
We apply  Lemma~\ref{lemma2} with $s= 10^{-3}  \ln T$ 
to $\cH$ on $N$ vertices. Then 
we obtain a subhypergraph
   $\cH^* =(V^*, \cE_2^*
 \cup \dots \cup \cE_k^*)$ induced on $V^*$ with $|V^*|=n$ such that the following hold:
\begin{enumerate}
\item 
$\displaystyle \frac{3}{4}  \frac{N}{e^{s}}  \leq n \leq   \frac{N}{e^{s}}$,
and
\item for each vertex $v \in V^*$, it is
\begin{eqnarray*} 
d_i^*(v) \leq {k-1 \choose i-1}  s^{\frac{k-i}{k-1}}  
\(\frac{T} {e^s}\)^{i-1},
\end{eqnarray*}
where $d_i^*(v)$ is the number of $i$-element edges in $\cH^*$ containing $v$.
\end{enumerate}

Set $
n_s= n$ and $t_s= T/e^s$ and $\cH_s = \cH^* =
(V_s, \cE_{2; s}
 \cup \dots \cup \cE_{k; s})$.
We apply Lemma~\ref{lemma1}
 with  $\varepsilon =  1/( 10^{6}  \ln T)$ 
 to  $\cH_s$, iteratively. Let $n_r$ and $t_r$ be such that
 \begin{equation}\label{eq:n(2)}
\frac{3}{4}\frac{N}{e^r}   (1 - \varepsilon)^{r-s}  \leq n_r \leq
\frac{N}{e^r} 
\end{equation}
and 
 \begin{equation}\label{eq:t(2)}
t_r = \frac{T}{e^r}  (1+\varepsilon)^{r-s}.
\end{equation}
 For each $r = s, \dots , 10^{-2}  \ln T$, we 
obtain 
an independent set $I_r \subseteq V_r$ and  a vertex set $V_{r+1}\subset V_r$ with $I_r\cap V_{r+1}=\emptyset$ satisfying the following:
\begin{itemize}
\item[(i)]{} $\alpha (\cH_r) \geq |I_r|
+ \alpha(\cH_{r+1})$ \hskip 2em where $\cH_{r+1}=\cH_r[V_{r+1}]$
\item[(ii)]{} $\displaystyle |I_r| \geq \frac{0.99}{e}  w_r\frac{n_r}{t_r}$ \hskip 2em where $w_r := (r+1)^{\frac{1}{k-1}} - r^{\frac{1}{k-1}}$
\item[(iii)]{} $\displaystyle \frac{n_r}{e}  (1 - \varepsilon) \leq |V_{r+1}| \leq \frac{n_r}{e}$ \hskip 2em recalling $\varepsilon=1/(10^6\ln T)$
\item[(iv)]{} for every vertex $v\in V_{r+1}$ and $2\leq i\leq k$,  $$d_{i; r+1}(v) \leq {k-1 \choose i-1}
 (s+1)^{\frac{k-i}{k-1}}  (t_{r+1})^{i-1},$$
where $d_{i; r+1}(v)$ is the number of
$i$-element edges in $\cH_{r+1}$ containing $v$.
\end{itemize}

We first check how many times we can iteratively apply Lemma~\ref{lemma1} to $\cH_r$. Observe that we can apply Lemma~\ref{lemma1}
as far as inequalities~\eqref{eq:s} and~\eqref{eq:n,t} are satisfied.
\begin{itemize}
\item  The inequality~\eqref{eq:s} is $s+r\leq 0.01(\ln T)$. 
\item From~\eqref{eq:n(2)}, the inequalities about $n$ in 
	(\ref{eq:n,t}) are satisfied if $\displaystyle \frac{1}{2}\leq \frac{3}{4}(1-\epsilon)^{r-s}$. Using $1-p\geq e^{-2p}$ for $0\leq p\leq 0.5$, one can check that it suffices to have 
$r\leq 10^5 \ln T.$

\item From~\eqref{eq:t(2)}, the inequalities about $t$ in 
	(\ref{eq:n,t}) are satisfied if $\displaystyle (1+\epsilon)^{r-s}\leq 2$. One can check that it suffices to have $r\leq 10^5 \ln T.$
\end{itemize}
Therefore, we can apply Lemma~\ref{lemma1} for $r+s\leq 0.01\ln T$.

Now we estimate the size of an independent set in $\cH$ obtained by the above procedure. Notice that by using $(1+\epsilon)^n \geq 1 +\varepsilon  n$ 
and $1 + \varepsilon \leq e^{\varepsilon}$ and $r \leq 10^{-2}  \ln T$
and $\varepsilon = 10^{-6}/\ln T$ we have 
\begin{eqnarray*}
 \frac{n_r}{t_r} &\geq & \frac{(3/4)  N  
(1-\varepsilon)^{r-s}/e^r}{   
T  (1+\varepsilon)^{r-s}/e^r }
\geq \frac{3}{4} \frac{  N}{ T}  
\frac{(1-\varepsilon)^{r}}{  (1+\varepsilon)^{r}}  \\
 &\geq& \frac{3}{4} \frac{  N}{ T}   \frac{1-\varepsilon  r}{
 e^{\varepsilon  r}} 
\geq \frac{3}{4}\frac{  N}{ T}   \frac{1-10^{-8}}{
 e^{10^{-8}}}  
 \geq 0.74  \frac{N}{T}.
 \end{eqnarray*}
Hence, recalling that  $w_r = (r+1)^{\frac{1}{k-1}} - r^{\frac{1}{k-1}}$, 
we obtain an independent set $I = I_s \cup \dots 
\cup I_{0.01\ln T}$
in $\cH$ with
\begin{eqnarray*}
 \alpha (\cH) &\geq & |I| = \sum_{r=s}^{0.01\ln T} |I_r| 
\geq  \frac{0.99}{e} 0.74  \frac{N}{T}   \sum_{r=s}^{0.01\ln T}
w_r  \\
&\geq & \frac{0.73}{e}  \frac{N}{T}   \sum_{r=s}^{0.01\ln T} 
\((r+1)^{\frac{1}{k-1}} - r^{\frac{1}{k-1}}\) \\ 
&\geq &   \frac{0.73}{e}  \frac{N}{T} (\ln T)^{\frac{1}{k-1}}  \(0.01^{\frac{1}{k-1}}-0.001^{\frac{1}{k-1}}\),
\end{eqnarray*}
which gives the lower bound~\eqref{tu1}
in Lemma~\ref{lem.uncrow.new}. 
\end{proof}

\section{A weak version of Theorem~\ref{theo.main-new}}\label{sec:linear with large}


Now we show a (seemingly) weaker version of Theorem~\ref{theo.main-new} in which $T$ and $N$ are large and the assumptions of the upper bounds on $t^{i-1}_i$ are a bit  different.

\begin{lemma} \label{lem.linear.new}
Let $ k\geq 2$ be a fixed integer.  Let $T$ and $N$ satisfy $T > T_0(k)$ 
and $ N > N_0(k, T)$.
 Let $\cH = (V, \cE_2 \cup \dots \cup \cE_k)$
 be a linear
hypergraph on $N$ vertices such that  there are no $3$-cycles and $4$-cycles consisting of $2$-element edges.  Let the average degrees
$t_i^{i-1} := i  |\cE_i|/ N$ 
 satisfy   for $i= 2, \dots , k$
  \begin{equation}\label{eq:cond} t_i^{i-1}  \leq c_i  
T^{i-1}  (\ln T)^{\frac{k-i}{k-1}},\end{equation}
where $c_i$ are constants satisfying $0<c_i < \frac{1}{2^9 k^6}  {k-1 \choose i-1} 
 10^{-3\frac{k-i}{k-1}}$.
Then, there exists a constant $C_k > 0$ such that
\begin{eqnarray} \label{gen-szem2}
\alpha (\cH) \geq
C_k    \frac{N}{T}  (\ln T)^{\frac{1}{k-1}}  \; .
\end{eqnarray}
\end{lemma}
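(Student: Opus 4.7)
The plan is to reduce Lemma~\ref{lem.linear.new} to Lemma~\ref{lem.uncrow.new} by preprocessing $\cH$ to destroy the remaining short cycles. First, I would apply Lemma~\ref{lemma2} to $\cH$ with $s = 10^{-3}\ln T$. The hypothesis $c_i < \frac{1}{2^9 k^6}\binom{k-1}{i-1} 10^{-3(k-i)/(k-1)}$ is strictly sharper than the bound required by Lemma~\ref{lemma2}, so the lemma applies and produces an induced subhypergraph $\cH^* = (V^*, \cE_2^*\cup\cdots\cup\cE_k^*)$ on $n \in [3N/(4e^s), N/e^s]$ vertices with maximum degrees bounded by $\binom{k-1}{i-1} s^{(k-i)/(k-1)}(T/e^s)^{i-1}$. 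Retracing the proof of Lemma~\ref{lemma2}, one sees that the small prefactor $1/(2^9k^6)$ actually makes these bounds a factor of order $k^{O(1)}$ tighter than the threshold needed downstream. Crucially, $\cH^*$ inherits linearity from $\cH$, and its $2$-element edges still form a graph of girth at least $5$.

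Second, I would destroy the remaining $3$- and $4$-cycles of $\cH^*$. Since $2$-element-only $3$-cycles and $4$-cycles are excluded by hypothesis, every such short cycle in $\cH^*$ must involve at least one edge of size $\geq 3$. Linearity and the girth-$5$ structure of $G_2^*$ sharply control these counts: for example, a $3$-cycle of type $(j,2,2)$ with $j\geq 3$ is determined by a size-$j$ edge $E$ together with a length-$2$ path in $G_2^*$ joining two of its vertices, and girth-$5$ forces each pair to admit at most one such path, so the count of $(j,2,2)$-cycles is at most $|\cE_j^*|\cdot\binom{j}{2}$. Analogous bounds hold for mixed $3$-cycle types and for $4$-cycles. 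Combining these counts with random vertex sampling at a carefully chosen probability $q$ (which reduces each cycle's survival by $q^{v(C)}$, with $v(C)\geq 4$ for bad $3$-cycles and $v(C)\geq 5$ for bad $4$-cycles) and deterministic deletion of one vertex per surviving cycle then produces an uncrowded induced subhypergraph $\cH^{**}$ of $\cH^*$ on at least $n/2$ vertices.

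Third, I would apply Lemma~\ref{lem.uncrow.new} to $\cH^{**}$. Its average degrees are at most a constant times those in $\cH^*$, so with the small prefactor from Step 1 the constants $c_i''$ in the hypothesis of Lemma~\ref{lem.uncrow.new} lie in the admissible range with parameter $T'' = T/e^s$. The conclusion $\alpha(\cH^{**})\geq C_k |V^{**}|/T''\cdot(\ln T'')^{1/(k-1)}$, combined with $|V^{**}|\geq 3N/(8e^s)$ and $\ln T'' = (1-10^{-3})\ln T$, yields $\alpha(\cH)\geq\alpha(\cH^{**})\geq C_k' (N/T)(\ln T)^{1/(k-1)}$, as claimed.

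The main obstacle is Step 2: a naive count shows that the total number of $3$- and $4$-cycles in $\cH^*$ can be polynomial in $t = T/e^s$ and hence much larger than $|V^*|$, so one cannot simply delete a vertex per cycle. The key insight is that linearity together with the girth-$5$ assumption on $G_2^*$ and the tightened degree bound from Step 1 forces every short cycle to pass through a relatively rigid structure (such as a length-$2$ path in $G_2^*$), so random sampling kills a large fraction of them. Engineering the sampling probability $q$ so that both the vertex loss and the residual cycle count are simultaneously controlled by a small constant fraction of $n$ is the technical heart of the argument, and the unusually small prefactor $\tfrac{1}{2^9k^6}$ in the hypothesis is precisely what affords the slack needed for this balancing.
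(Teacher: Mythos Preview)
Your outline has the right shape---thin the hypergraph, destroy short cycles, invoke Lemma~\ref{lem.uncrow.new}---but Step~2 contains a genuine gap that Step~3 then inherits. After Step~1 you have $n\asymp N/e^{s}$ vertices and maximum $i$-degrees of order $t^{i-1}$ with $t=T/e^{s}=T^{1-10^{-3}}$. Your own count for $(j,2,2)$-cycles already gives $|\cE_j^{*}|\binom{j}{2}=\Theta(nt^{j-1})$, and the sharper linearity count (fix the vertex common to the two $2$-edges; then the $j$-edge is determined) gives $\Theta(nt^{2})$ for every $j\ge 3$; either way the number of bad $3$-cycles is at least of order $nt^{2}$, vastly larger than $n$. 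Sampling at probability $q$ reduces the $(2,2,3)$-cycle count to $\Theta(q^{4}nt^{2})$ (such a cycle spans four vertices), so to make the surviving count $o(qn)$ you need $q=o(t^{-2/3})$. But then $|V^{**}|\approx qn=o(nt^{-2/3})$, contradicting your claim $|V^{**}|\ge n/2$, and the degrees drop by $q^{i-1}$, contradicting your claim that they are ``at most a constant times those in $\cH^{*}$.'' Neither the girth-$5$ rigidity you invoke nor the small prefactor $1/(2^{9}k^{6})$ changes the exponent of $t$ in the cycle count, so no constant-probability thinning can balance vertex loss against residual cycles as you assert in your final paragraph.

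The paper repairs this by skipping Lemma~\ref{lemma2} altogether and, after a simple high-degree truncation, sampling at the much more aggressive rate $p=T^{-1+\varepsilon}$ with $\varepsilon=1/(4k)$. This drives the effective parameter down from $T$ to $T^{\varepsilon}$: the surviving $3$- and $4$-cycle counts become $O(NT^{-2+O(k\varepsilon)})=o(|V^{**}|)$, and Lemma~\ref{lem.uncrow.new} is then applied with parameter $T^{\varepsilon}$ rather than $T/e^{s}$. The key observation you are missing is that the ratio $|V|/T$ is essentially invariant under sampling while cycle counts drop by a higher power of $p$, so one may sacrifice almost all of $T$ and still recover the target $C_k(N/T)(\ln T)^{1/(k-1)}$; by fixing $T''=T/e^{s}$ in Step~3 you foreclose exactly this trade-off.
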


\begin{proof}
Let $d_i(v)$ denote the number of $i$-element 
edges in $\cH$ containing vertex $v \in V$.  
 We  delete all vertices 
$v \in V$ 
 with $d_i(v) >  k  c_i   T^{i-1}  
(\ln T)^{\frac{k-i}{k-1}}$ for some $i = 2, \dots , k$.
Let $V^*$ be the set of remaining vertices, hence
 we have $|V^*| \geq N/k$. Then, the subhypergraph 
$\cH^* =(V^*,\cE_2^*
\cup \dots \cup \cE_k^*) $ of
$\cH$ induced on $V^*$ satisfies that for each vertex $v \in V^*$ 
\begin{eqnarray} \label{zu21}
d_i^*(v) \leq  k  c_i   T^{i-1}  
(\ln T)^{\frac{k-i}{k-1}}, 
\end{eqnarray}
where $d^*_i(v)$ is the number of $i$-element edges in $\cH^*$ containing
vertex $v$.

Now we  estimate the numbers of $3$-cycles and of $4$-cycles not containing a $3$-cycle in  
$\cH^*$. First we consider the number of $3$-cycles in $\cH^*$.
 Let $C^*(g,h,i)$ denote the number of $3$-cycles $\{E_1, E_2, E_3\}$ 
in $\cH^*$ such that $|E_1|=g$, $|E_2|=h$, and $|E_3|=i$. To estimate  
$C^*(g,h,i)$, we fix a vertex $v \in V^*$ and regard it as a vertex in $E_1\cap E_2$. 
There are at most $k  c_g   T^{g-1}  
(\ln T)^{\frac{k-g}{k-1}}$ 
edges in $\cE_g^*$
containing vertex $v$. Let $E_1$ be one of these edges. Similarly, there are at most
$k   c_h   T^{h-1}  
(\ln T)^{\frac{k-h}{k-1}}$  edges in $\cE_h^*$
containing $v$. Let $E_2$ be one of 
these edges. Moreover, 
we fix one vertex $w \in E_1$ with $w \neq v$ 
and another vertex $x\in E_2$ with $x\neq v$ in at most $(k-1)^2$
ways, and consider $v, x\in E_3$. Since $\cH^*$ is linear, 
there is at most one $i$-element edge $E_3 \in \cE_i^*$
containing both vertices $w$ and $x$. Hence, for each $2\leq g\leq h\leq i\leq k$,
\begin{eqnarray} \label{zu11}
 C^*(g,h,i) &\leq & N \cdot k  c_g   T^{g-1}  
(\ln T)^{\frac{k-g}{k-1}} \cdot k  c_h   T^{h-1}  
(\ln T)^{\frac{k-h}{k-1}}  \cdot
 (k-1)^2   \nonumber \\
&<& N  k^4  c_g  c_h  T^{g+h-2}   (\ln T)^2  .
\end{eqnarray}

Next, we consider the number of $4$-cycles in $\cH^*$. Let $C^*(g,h,i,j)$ be the number of $4$-cycles $\{E_1,E_2,E_3,E_4\}$ in $\cH^*$ such that 
$|E_1|=g$, $|E_2|=h$, $|E_3|=i$, and $|E_4|=j$ and  any three of $\{E_{1}, E_{2}, E_{3}, E_4\}$ do not  form a $3$-cycle.
 With an argument similar to the argument to obtain~\eqref{zu11}, 
one can show that for each $2\leq g\leq h\leq i\leq j\leq k$,
\begin{eqnarray} \label{zu12}
 C^*(g,h,i,j)  
< 3  N   k^6  c_g  c_h  c_i 
   T^{g+h+i-3}   (\ln T)^{3}.
\end{eqnarray}

Note that the factor $3$ in~\eqref{zu12} arises due to the possible arrangements
of the edges of possibly different sizes.

Now we choose each vertex in $V^*$ independently with probability $p =T^{-1+\varepsilon}$ for
 some constant $\varepsilon
> 0$. Let $V^{**} \subseteq V^*$ be the set of chosen vertices,
and let $\cH^{**} = (V^{**}, \cE_3^{**}
\cup \dots \cup  \cE_k^{**})$ be the
subhypergraph of $\cH$ induced on $V^{**}$. 
Note that 
\begin{equation}\label{eq:V^{**}}
\EE\[|V^{**}|\]=p|V^*|\geq \frac{N}{k}T^{-1+\epsilon}.
\end{equation}

Let $C^{**}(g,i,j)$ and $C^{**}(g,h,i,j)$ be the numbers of 
$3$-cycles and $4$-cycles (not containing a $3$-cycle) in $\cH^{**}$, respectively.
Since a $3$-cycle covers exactly $(g+h+i-3)$ vertices in a linear hypergraph, inequality~\eqref{zu11} yields that for $i\geq 3$,
\begin{eqnarray*}
 \EE[C^{**}(g,h,i)] < p^{g+h+i-3}  N 
 k^4  c_g  c_h    T^{g+h-2} 
  (\ln T)^2 
&\leq&  
 k^4  c_g  c_h 
 N  T^{-(i-1)+\varepsilon(g+h+i-3)}  
(\ln T)^{2} \nonumber \\
&< &
 k^4  c_g  c_h 
   N  T^{-2+3k \varepsilon}  
(\ln T)^{2}. 
\end{eqnarray*}
Moreover, since there are no $3$-cycles
 with $3$ edges, each of size $2$, in $\cH$, we have $\EE[C^{**}(2,2,2)]=0$.
Hence, we infer
\begin{eqnarray}
 \sum_{2\leq g\leq h\leq i\leq k} \EE[C^{**}(g,h,i)] 
<    k^7\max_{2 \leq g \leq k} \{ c_g^2\}  \cdot
  N  T^{-2+3k \varepsilon}  
(\ln T)^{2}. \label{eq:3-cycle}
\end{eqnarray}

Similarly, inequality~\eqref{zu12} implies that for $j\geq 3$,
\begin{eqnarray*}
 \EE[C^{**}(g,h,i,j)] &< & p^{g+h+i+j-4}  3  N   k^6  
c_g  c_h 
c_i 
T^{g+h+i-3}   (\ln T)^{3}
 \\
&=&  3  k^6   c_g  c_h 
c_i  N T^{-(j-1)+\varepsilon(g+h+i+j-4)}  
(\ln T)^{3} \\ 
&\leq& 3k^6  \max_{2\leq g\leq k} \{c_g^3\} \cdot
N  T^{-2+4k \varepsilon}  
(\ln T)^{3}.
\end{eqnarray*}
Also since there are no $4$-cycles with $4$ edges, each of size $2$, in $\cH$, we have $\EE[C^{**}(2,2,2,2)]=0$.
Thus, we infer
\begin{eqnarray}\label{eq:4-cycle}
 \sum_{2 \leq g \leq h \leq i \leq j  \leq k} \EE[C^{**}(g,h,i,j)] 
\leq 3 k^{10}  \max_{ 2\leq g \leq k } \{  c_g^3 \} \cdot
N  T^{-2+4k \varepsilon}  
(\ln T)^{3}.
\end{eqnarray}

By~\eqref{zu21}, we have 
for $ i=2, \dots , k$ that 
\begin{eqnarray}\label{eq:E_i}
\EE[|\cE_i^{**}|] &= & p^i |\cE_i^*| \leq  p^i k  c_i   T^{i-1}  
(\ln T)^{\frac{k-i}{k-1}}\frac{|V^*|}{i}
   = k  c_i
   T^{-1+\epsilon i}  (\ln T)^{\frac{k-i}{k-1}} \frac{|V^*|}{i}.
\end{eqnarray}
Chernoff's and Markov's inequalities with~\eqref{eq:V^{**}}--\eqref{eq:E_i}
imply that
there exists a subhypergraph  $\cH^{**} = 
(V^{**}, \cE_2^{**}
\cup \dots \cup  \cE_k^{**})$ induced on $V^{**}$ such that
\begin{eqnarray}
|V^{**}|  &\geq& \frac{1}{2k}NT^{-1+\epsilon} \nonumber \\
|\cE_i^{**}| &\leq&  (k+2)  k  c_i
   T^{-1+\epsilon i}  (\ln T)^{\frac{k-i}{k-1}} \frac{ |V^*|}{i}
 \label{zu22} \\
\sum_{2 \leq g \leq h \leq i \leq k}
C^{**}(g,h,i) &\leq& (k+2)  k^7\max_{2 \leq g \leq k} \{ c_g^2\}  \cdot
  N  T^{-2+3k \varepsilon}  
(\ln T)^{2} \nonumber \\
\sum_{2 \leq g \leq h \leq i \leq j \leq k} C^{**}(g,h,i,j) &\leq& 
(k+2)  3 k^{10}  \max_{ 2\leq g \leq k } \{  c_g^3 \} \cdot
N  T^{-2+4k \varepsilon}  
(\ln T)^{3} \nonumber.
\end{eqnarray}
For $0 < \varepsilon
< 1/(4  k -1)$ and $T > T_0(k)$, 
the number of $3$-cycles and the number of $4$-cycles (not containing a $3$-cycle) 
are much less than $|V^{**}|$.
 
Let $\cH^{***}=(V^{***}, \cE_2^{***}\cup \cdots \cup \cE_k^{***})$ be the 
subhypergraph obtained from $\cH^{**}$ by removing one vertex from each $3$-cycle and $4$-cycle (not containing a $3$-cycle).
With  $\varepsilon := 1/(4  k)$, we have
\begin{equation}\label{eq:V^{***}}|V^{***}| \geq \frac{1}{4k} N  T^{-1+\varepsilon}.\end{equation} 
We infer that the average degrees $(t_i^{***})^{i-1}$ of $\cH^{***}$ satisfy 
 for $i=2,\dots, k$:
\begin{eqnarray*}
(t_i^{***})^{i-1}&=& \frac{i |\cE_i^{***}|}{|V^{***}|}\leq  \frac{i |\cE_i^{**}|}{|V^{***}|}\overset{\eqref{zu22}}{\leq}  (k+2)  k  c_i 
   T^{-1+\epsilon i}  (\ln T)^{\frac{k-i}{k-1}} \frac{ |V^*|}{|V^{***}|} \\
   &\leq & (k+2)  k  c_i 
   T^{-1+\epsilon i}  (\ln T)^{\frac{k-i}{k-1}} \frac{ N}{|V^{***}|} \\
   &\overset{\eqref{eq:V^{***}}}{\leq} & 4k^2(k+2) c_i T^{\varepsilon(i-1)} (\ln T)^{\frac{k-i}{k-1}} \\
   &=& \frac{4k^2(k+2)}{\varepsilon^{\frac{k-i}{k-1}}} c_i (T^{\varepsilon})^{i-1} (\ln (T^{\varepsilon}))^{\frac{k-i}{k-1}}\\
   &\leq &   32 k^4 c_i (T^{\varepsilon})^{i-1} (\ln (T^{\varepsilon}))^{\frac{k-i}{k-1}}.
\end{eqnarray*}
Since the assumption 
$32 k^4  c_i < \frac{1}{{16}k^2}  {k-1\choose i-1}
 10^{-3\frac{k-i}{k-1}}$ of Lemma~\ref{lem.uncrow.new} is satisfied, this implies that 
there exists a constant $C_k > 0$ such that
\begin{eqnarray*}
 \alpha( \cH^{***} ) &\geq & C_k \frac{|V^{***}|}{T^{\varepsilon}} (\ln (T^{\varepsilon}))^{\frac{1}{k-1}} \overset{\eqref{eq:V^{***}}}{\geq} 
C_k  \frac{N  T^{-1+\varepsilon}/(4  k   
)}{
 T^{\varepsilon} }  
(\ln (
 T^{\varepsilon}))^{\frac{1}{k-1}} \\
&\geq& C_k'  \frac{N}{T}
 (\ln T)^{\frac{1}{k-1}}, 
\end{eqnarray*}
which completes our proof of Lemma~\ref{lem.linear.new}.
\end{proof}


\section{Proof of Theorem~\ref{theo.main-new}}\label{sec:linear}

We are going to show Theorem~\ref{theo.main-new} from Lemma~\ref{lem.linear.new} that is a weaker version of Theorem~\ref{theo.main-new}. We need to modify two assumptions of Lemma~\ref{lem.linear.new}: the first assumption is about the upper bounds on $t^{i-1}_i$, and the second assumption is about the ranges of $T$ and $N$.

We first change the assumptions on the upper bounds on $t^{i-1}_i$, and show the following.

\begin{lemma} \label{lem.main-new with c}
Let $ k\geq 2$ be a fixed integer.  Let $T$ and $N$ satisfy $T > T_0(k)$ 
and $ N > N_0(k, T)$.
  Let $\cH = (V, \cE_2 \cup \dots \cup \cE_k)$
 be a linear
hypergraph on $N$ vertices such that  there are no $3$-cycles and $4$-cycles consisting of $2$-element edges.  Let the average degrees
$t_i^{i-1} := i  |\cE_i|/ N$ 
 satisfy that  for $i= 2, \dots , k$
  \begin{equation}\label{eq:cond2}t_i^{i-1}  \leq  
T^{i-1}  (\ln T)^{\frac{k-i}{k-1}}.\end{equation}
Then, there exists a constant $C_k > 0$ such that
\begin{eqnarray*} \label{szem-new with c}
\alpha (\cH) \geq
C_k    \frac{N}{T}  (\ln T)^{\frac{1}{k-1}}  \; .
\end{eqnarray*}
\end{lemma}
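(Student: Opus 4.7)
The plan is to deduce Lemma~\ref{lem.main-new with c} from Lemma~\ref{lem.linear.new} by a simple rescaling of the parameter $T$. Lemma~\ref{lem.linear.new} assumes the stronger bound $t_i^{i-1} \leq c_i\, T^{i-1} (\ln T)^{(k-i)/(k-1)}$ with small constants $c_i$, whereas Lemma~\ref{lem.main-new with c} has the same bound with $c_i$ effectively replaced by $1$. The key observation is that replacing $T$ by $T' := \beta_k T$ for a sufficiently large constant $\beta_k = \beta_k(k) \geq 1$ turns the weaker hypothesis~\eqref{eq:cond2} into the stronger hypothesis~\eqref{eq:cond} at the value $T'$, while the conclusion of Lemma~\ref{lem.linear.new} applied at $T'$ differs from the desired bound by only a multiplicative constant.

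Concretely, I would fix any admissible choice of the $c_i$ allowed in Lemma~\ref{lem.linear.new}, say $c_i := \frac{1}{2^{10} k^6}\binom{k-1}{i-1} 10^{-3(k-i)/(k-1)}$ for $i = 2, \dots, k$, and then set $\beta_k := \max_{2 \leq i \leq k} c_i^{-1/(i-1)}$, which is a positive constant depending only on $k$ and satisfies $\beta_k \geq 1$. Letting $T' := \beta_k T$, so that $T' \geq T$ and hence $\ln T' \geq \ln T$, the hypothesis~\eqref{eq:cond2} yields
\begin{equation*}
t_i^{i-1} \leq T^{i-1}(\ln T)^{(k-i)/(k-1)} \leq c_i \beta_k^{i-1} T^{i-1}(\ln T')^{(k-i)/(k-1)} = c_i (T')^{i-1}(\ln T')^{(k-i)/(k-1)},
\end{equation*}
because $c_i \beta_k^{i-1} \geq 1$ by the choice of $\beta_k$. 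The hypergraph $\cH$ is untouched, so the structural assumptions (linear, no $3$-cycles and no $4$-cycles on $2$-element edges) continue to hold. By enlarging the thresholds $T_0(k)$ and $N_0(k,T)$ in the statement of Lemma~\ref{lem.main-new with c} so that $T > T_0(k)$ and $N > N_0(k,T)$ imply the corresponding thresholds for $T'$ required in Lemma~\ref{lem.linear.new}, I can apply Lemma~\ref{lem.linear.new} to $\cH$ with parameter $T'$.

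The resulting bound reads
\begin{equation*}
\alpha(\cH) \geq C_k \frac{N}{T'}(\ln T')^{1/(k-1)} \geq \frac{C_k}{\beta_k}\frac{N}{T}(\ln T)^{1/(k-1)},
\end{equation*}
which is exactly the conclusion of Lemma~\ref{lem.main-new with c} with the new constant $C_k/\beta_k$ depending only on $k$. I do not foresee any serious obstacle in this argument; the only points to verify are the monotonicity $\ln T' \geq \ln T$ and the inequality $c_i \beta_k^{i-1} \geq 1$, both of which are immediate from the construction of $\beta_k$.
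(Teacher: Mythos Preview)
Your proposal is correct and follows essentially the same approach as the paper: both arguments rescale $T$ to a larger value $T'=\beta_k T$ (the paper writes $T/c_i^*$ with $c_i^*=(c_i/2)^{1/(i-1)}$) so that the hypothesis~\eqref{eq:cond2} becomes the hypothesis~\eqref{eq:cond} of Lemma~\ref{lem.linear.new} at the new parameter, and then absorb the constant $\beta_k$ into $C_k$. Your choice of a single $\beta_k=\max_i c_i^{-1/(i-1)}$ is in fact cleaner than the paper's presentation, which uses an $i$-dependent scaling factor but implicitly relies on the same monotonicity you spell out.
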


\begin{proof} We are going to use Lemma~\ref{lem.linear.new}. To this end, we need to change the assumption~\eqref{eq:cond2} to the shape of the assumption~\eqref{eq:cond} in Lemma~\ref{lem.linear.new}. We have
\begin{eqnarray*}
t^{i-1}_i &\leq& T^{i-1}(\ln T)^{\frac{k-i}{k-1}}= c_i\cdot \frac{2}{c_i}T^{i-1}\cdot \frac{1}{2}(\ln T)^{\frac{k-i}{k-1}} \\
&\leq & c_i \(\frac{T}{c^*_i}\)^{i-1}\(\frac{1}{2}\ln T\)^{\frac{k-i}{k-1}} 
\leq  c_i \(\frac{T}{c^*_i}\)^{i-1}\(\ln \frac{T}{c^*_i}\)^{\frac{k-i}{k-1}},
\end{eqnarray*}
where $c^*_i:=(c_i/2)^{\frac{1}{i-1}}$ and the last inequality holds because $T$ is sufficiently large depending on $k$.

Now we apply Lemma~\ref{lem.linear.new}, and we infer
\begin{eqnarray*}
\alpha(\cH)&\geq& C_k\frac{N}{T/c^*_i}\(\ln \frac{T}{c^*_i}\)^{\frac{1}{k-1}} \geq C_kc^*_i\frac{N}{T}\cdot \frac{1}{2}(\ln T)^{\frac{1}{k-1}} \\
&\geq& C'_k \frac{N}{T}(\ln T)^{\frac{1}{k-1}},
\end{eqnarray*}
where $C'_k:= C_k c^*_i/2$ and 
the second inequality holds since $T$ is sufficiently large depending on $k$.
This completes the proof of Lemma~\ref{lem.main-new with c}.
\end{proof}

In order to show Theorem~\ref{theo.main-new}, it only remains to enlarge the ranges of $T$ and $N$ in Lemma~\ref{lem.main-new with c} as $T\geq 1.5$ and every $N$.

First, we enlarge the range of $T$ from $T> T_0(k)$ to $T\geq 1.5$. If $1.5\leq T\leq T_0(k)$, then $T\ln T\geq 1/2$, and hence, Theorem~\ref{turan.2.non} with $t^{i-1}_i\leq (T\ln T)^{i-1}$ implies that 
\begin{eqnarray*}
\alpha(\cH) \geq \frac{1}{4}\frac{N}{T\ln T} \geq \frac{1}{4}\frac{N}{T\ln T_0}
&=& \(\frac{1}{4 (\ln T_0)^{1+\frac{1}{k-1}}}\)\frac{N}{T}(\ln T_0)^{\frac{1}{k-1}}\\
&\geq& \(\frac{1}{4 (\ln T_0)^{1+\frac{1}{k-1}}}\)\frac{N}{T}(\ln T)^{\frac{1}{k-1}}=C_k\frac{N}{T}(\ln T)^{\frac{1}{k-1}}.
\end{eqnarray*}

Next, we enlarge the range of $N$ from $N>N_0(k, T)$ to be every $N$.
Let 
$\cH = (V,
\cE)$ be a hypergraph satisfying the assumption in Lemma~\ref{lem.main-new with c} except for $T> T_0(k)$ and $N> N_0(k,T)$, and suppose that $ N \leq N_0(k,T)$ and $T\geq 1.5$.

Let $L> N_0(k,T)/N$, and consider the hypergraph $\cH'$ obtained by $L$ 
vertex-disjoint copies of  
$\cH$. Observe that $\cH'$ has $LN$ vertices and its 
average degree is the
same as the average degree of $\cH$. Hence $\cH'$ satisfies the assumption of
Lemma~\ref{lem.main-new with c}, thus
 $$\alpha (\cH') 
\geq C_k  \frac{LN}{T}  (\ln T)^{\frac{1}{k-1}}.$$
Since  $\alpha (\cH)  = \alpha (\cH')  /L$, we infer $$\alpha (\cH) 
\geq C_k \frac{N}{T} (\ln T)^{\frac{1}{k-1}},$$
which completes the proof of Theorem~\ref{theo.main-new}. \qed

\section{Proof of Theorem~\ref{thm:application_a}}\label{sec:proof_application}

 It will be convenient here to work with the $O$-notation.
	 For functions $f,g \colon {\mathbb N}
 \longrightarrow {\mathbb N}$ and a fixed  integer $\ell > 0$,
 let $f= O_{\ell} (g)$ mean that there exists a constant $c > 0$, depending only on $\ell$, such that $f(n) \leq c g(n)$ for every sufficiently large $n \in {\mathbb N}$.

\begin{proof}[Proof of the lower bound of Theorem~\ref{thm:application_a}]  
 We will  use in our arguments some ideas 
from~~\cite{ALR91}
(compare also~~\cite{LRW96}). Recall that $\cH=\cH(n;2,\ldots  ,\ell)$ is the hypergraph on the vertex set $V$, $|V|=n$, in which, for $s=2, \ldots, \ell$, each $s$-subset of $V$ is a hyperedge of $\cH$, and let a $(u_2,  \ldots u_{\ell})$-bounded edge-coloring $\Delta$ of $\cH$ be given.  We define another hypergraph $\cG=(V,E)$ as follows: If $e_1$ and $e_2$ are hyperedges in $\cH$ with the same color in $c$, then $e_1\cup e_2 \in E(\cG)$. Let $E_{2i}$ be the set of hyperedges of $\cG$ of size $2i$, hence $E=
	\bigcup_{i=2}^{\ell} E_{2i}$. Observe that if $I\subset V$ is an independent set of $\cG$, then the subhypergraph $\cH[I]$ of $\cH$ induced on $I$ is totally multicolored. Hence, a lower bound on $f_{\Delta}(n;u_2,\ldots,u_{\ell})$ can be obtained by finding an independent set in $\cG$.
Therefore,  it suffices to show the following:
\begin{enumerate}
\item[(i)]  Under assumption~\eqref{cond1_a}, we have 
 \begin{equation*}\label{eq:result1}\alpha(\cG)\geq c_1\(\frac{n^s}{u_s}\)^{\frac{1}{2s-1}}.
\end{equation*}
\item[(ii)]  Under assumptions~\eqref{cond2_a} and~\eqref{cond3_a}, we have 
	\begin{equation*}\label{eq:result2}
		{ \alpha(\cG)\geq c_3\(\frac{n^s}{u_s}{ \ln n}\)^{\frac{1}{2s-1}}.   }
\end{equation*}
\end{enumerate}

We first prove (i). Let assumption \eqref{cond1_a} hold.
Set $$p=\(\frac{1}{n^{s-1}u_s}\)^{\frac{1}{2s-1}}.$$
Let $R$ be a random subset of $V$ obtained by choosing each vertex independently with probability $p$. Note that with high probability $|R|=np(1+o(1))$.

  For $i=2, \ldots, \ell$, let $E_{2i}^R$ be the set of all $2i$-element hyperedges in the subhypergraph $\cG[R]$ 
induced on $R$.
To estimate the expected numbers $\EE(|E^R_{2i}|)$ of $2i$-element hyperedges,
	choose an $i$-element hyperedge $e$ in $\binom{n}{i}$ ways. Less than 
	$u_i$ other  hyperedges have the same color as $e$, thus
	$$
		|E_{2i}| \leq  \frac{n^iu_i}{2(i!)} \hspace{1cm} \mbox{hence,} 
\hspace{1cm} \EE(|E^R_{2i}|)\leq  \frac{n^iu_ip^{2i}}{2(i!)}.  
	$$
Consequently, Markov's inequality gives  for $i=2,\ldots,\ell$ that
\begin{equation*}
\Pr\[|E^R_{2i}|>  \ell\cdot \frac{n^iu_ip^{2i}}{2(i!)}\]\leq \frac{1}{\ell}.
\end{equation*}

Therefore, there exists a vertex set $R\subset V$ such that
\begin{itemize}
\item $|R|\geq np/2$,
\item $\displaystyle |E^R_{2i}|\leq  \ell\cdot \frac{n^iu_ip^{2i}}{2(i!)}$\hskip 1em for $i=2,\dots,\ell$.
\end{itemize}
The average degree $t_{2i}^{2i-1}$ for the $2i$-element hyperedges in the subhypergraph $\cG[R]$ satisfies
$$ t_{2i}^{2i-1}\leq \(2i\ell\cdot \frac{n^iu_ip^{2i}}{2(i!)}\)\Big/\(np/2\)=\frac{2\ell}{(i-1)!}n^{i-1}u_ip^{2i-1},$$
and hence, for $i=2,\ldots , \ell$, and some constant $c_{\ell} > 0$, it is
$$t_{2i}\leq c_{\ell}\(n^{i-1}u_i\)^{1/(2i-1)}p\stackrel{\eqref{cond1_a}}{\leq} c_{\ell}\(n^{s-1}u_s\)^{1/(2s-1)}p.$$ 
 By Theorem~\ref{turan.2.non} we infer that 
$$\alpha(\cG)\geq \frac{1}{4}\frac{np/2}{c_{\ell}\(n^{s-1}u_s\)^{1/(2s-1)}p}=
 c'_{1} \(\frac{n^s}{u_s}\)^{1/(2s-1)},
$$
for some constant $ c'_{1}  > 0$, depending only on $\ell$, which completes the proof of (i).

Next we show (ii).
Suppose that~\eqref{cond2_a} and~\eqref{cond3_a} hold. Set 
\begin{equation}\label{eq:second p}p=\(\frac{1}{n^{s-1}u_s}\)^{\frac{1}{2s-1}}\omega, \hskip 2em
\mbox{where $\omega:= \(u_s^2/n\)^{\frac{1}{2(2s-1)(2\ell+1)}}.$}
\end{equation} 
 As above, let $R$ be a random subset of $V$ obtained by choosing each vertex independently with probability $p$. 
We again have that
\begin{itemize}
\item $|R|=np(1+o(1))$ with high probability,
\item For $i=2,\ldots ,\ell$, \begin{equation*}
		\Pr\[|E^R_{2i}|>  3\ell \cdot \frac{n^iu_ip^{2i}}{2(i!)}\]\leq { \frac{1} {3\ell} }.
\end{equation*}
\end{itemize}

Next, we consider $2$-cycles in $\cG$. For integers $2\leq i, j, k \leq \ell$, let $C(2i,2j,k)$ be the family of all $2$-cycles which consist of two distinct hyperedges in $\cG$ with one of size $2i$ and the other of size $2j$ sharing exactly $k$ vertices. Note that the condition on $k$ is either $2 \leq k\leq 2i-1$ for $i=j$ or $ 2 \leq k \leq 2i$ for $ i<j$.  We estimate $|C(2i,2j,k)|$ as follows. Fix the first hyperedge $e\in E_{2i}$ as an arbitrary one in at most $n^iu_i$ ways. Let $e_1\cup e_2\in E_{2j}$ ($e_1,e_2\in E_j(\cH)$) be the second hyperedge in $\cG$ and let $k_1=|e_1\cap e|$ and $k_2=|e_2\cap e|$. Without loss of generality, we assume $k_1\geq k_2$.
\begin{itemize}
\item Suppose $k_1=k$ and $k_2=0$. The number of choices of $e_1$ is at most $O_{\ell}(n^{j-k})$. Since the color of $e_1$ is determined, the number of choices of $e_2$ is at most $u_j\leq n$ because of the assumption that each color class is a matching. Hence, the number of choices of $(e_1,e_2)$ is at most $ O_{\ell}(n^{j-k+1})$.
\item Otherwise, we have $k_2\neq 0$. The number of choices of $e_1$ is  at most $O_{\ell}(n^{j-k_1})$. Then, the number of choices of $e_2$ is  at most $O_{\ell}(1)$. Hence, the number of choices of $(e_1,e_2)$ is  at most $O_{\ell}(n^{j-k_1})$. By minimizing $k_1$, we have the upper bound $O_{\ell}\(n^{j- \lceil k/2\rceil }\)$.
\end{itemize}
Consequently, we have that for all $k \geq 2$,
\begin{equation*}
	|C(2i,2j,k)|=  O_{\ell}\(|E_{2i}|\cdot n^{j- \lceil k/2\rceil }\)=O_{\ell}\(u_i n^{i+j-  \lceil k/2\rceil } \).
\end{equation*}

Let $C^R(2i,2j,k)$ be the random set of all $2$-cycles in $C(2i,2j,k)$ that are contained in $R$. Let 
\begin{eqnarray*}
C^R(2i,2i)&=&\bigcup_{k=2}^{2i-1}C^R(2i,2i,k) \hskip 1em \mbox{for $2\leq i\leq \ell$, \hskip 2em and } \\
C^R(2i,2j)&=&\bigcup_{k=2}^{2i}C^R(2i,2j,k) \hskip 1em \mbox{ for $2\leq i<j\leq \ell$.}
\end{eqnarray*}
Since $\EE\[|C^R(2i, 2j, k)|\]=|C(2i,2j,k)|p^{2i+2j-k}$, we infer that for $2\leq i\leq \ell$,
\begin{eqnarray*}
	\EE\[|C^R(2i,2i)|\]&=&\sum_{k=2}^{2i-1} \EE\[|C^R(2i,2i,k)|\] \stackrel{(np\gg 1)}{=}  O_{\ell}\( u_in^{2i-1}p^{4i-2}+u_in^{i+1}p^{2i+2}\) \\ &\stackrel{(np^2\ll 1)}{=} &O_{\ell}\(u_in^{i+1}p^{2i+2}\),
\end{eqnarray*}
and that for $2\leq i < j\leq \ell$,
\begin{eqnarray*}
\EE\[|C^R(2i,2j)|\]&=& \sum_{k=2}^{2i} \EE\[|C^R(2i,2j,k)|\] \stackrel{(np\gg 1)}{=}  O_{\ell}\(u_i n^{j+i-1}p^{2j+2i-2}+u_in^{j}p^{2j}\) 
\\ &\stackrel{(np^2\ll 1)}{=}&O_{\ell}\(u_in^{j}p^{2j}\).
\end{eqnarray*}
Using Markov's inequality, it simultaneously holds with probability bigger than $2/3$ that
\begin{eqnarray*}
|C^R(2i,2i)|&=&  O_{\ell}\(u_in^{i+1}p^{2i+2}\) \hskip 1em
\mbox{ for $2\leq i\leq \ell$, \hskip 2em and} \\
|C^R(2i,2j)|&=&  O_{\ell}\(u_in^{j}p^{2j}\) \hskip 2.8em
\mbox{ for $2\leq i<j\leq \ell$}.
\end{eqnarray*}
Therefore, there exists a subset $R \subset V$ such that for some constant $c>0$, depending only on $\ell$, we have that
\begin{itemize}
	\item $|R| \geq np/2$
	\item $|E_{2i}^R| \leq cu_i n^i p^{2i}$ \hskip 6.25em for $2\leq i\leq \ell$
	\item $|C^R(2i,2i)| \leq cu_i n^{i+1} p^{2i+2}$\hskip 1.7em for $2\leq i\leq \ell$, \hskip 2em and \\ $|C^R(2i,2j)|\leq  cu_in^{j}p^{2j}$ \hskip 3.05em for $2\leq i<j \leq \ell$.
\end{itemize}
  We claim that $\sum_{i=2}^{\ell}|C^R(2i,2i)|\ll np$ and $\sum_{2\leq i<j\leq \ell} |C^R(2i,2j)|\ll np$. For the first inequality, it suffices to check that for $2\leq i\leq \ell$,
 $$ u_in^{i+1}p^{2i+2}\ll np,
\mbox{\hskip 1em that is, \hskip 1em } u_ip(np^2)^{i}\ll 1,$$
indeed,  we have that
\begin{equation}\label{eq:aux}
 { u_ip\overset{\eqref{cond2_a}}{\leq}\(\frac{u_s^2}{n}\)^{\frac{i-1}{2s-1}}\omega  (\ln n)^{\frac{ 2(s-i)}{2s-1}}  }
\mbox{\hskip 1em and \hskip 1em } np^2=\(\frac{n}{u_s^2}\)^{\frac{1}{2s-1}}\omega^2.
\end{equation}
Hence, $$u_ip(np^2)^{i}{ \leq \(\frac{n}{u_s^2}\)^{\frac{1}{2s-1}}\omega^{2i+1}  (\ln n)^{\frac{2(s-i)}{2s-1}} }  \ll 1,$$
where the last inequality follows from  $u_s\geq n^{\frac{1}{2} + \varepsilon}$ and $\omega=\(u_s^2/n\)^{\frac{1}{2(2s-1)(2\ell+1)}}.$
Next, for the second inequality,  similarly it follows that
$u_in^{j}p^{2j} \ll np$ for $2\leq i<j\leq \ell$.


After deleting a vertex from each member of $C^R(2i, 2i)$ or $C^R(2i, 2j)$, there exists a vertex set $U\subset R$ such that
\begin{itemize}
	\item $\displaystyle |U|\geq \frac{np}{ 4}$
\item $\displaystyle |E^{U}_{2i}|\leq   cu_in^ip^{2i}$ \hskip 1em for $2\leq i\leq \ell$.
\item There is no 2-cycle in the subhypergraph $\cG[U]$ induced on $U$.
\end{itemize}

 Set 
\begin{eqnarray} \label{eq:T}
	T &:=& c^*  p (n^{s-1} u_s )^{\frac{1}{2s-1}} 
	(\ln n)^{\frac{2s-2\ell}{(2s-1)(2\ell-1)}}, 
\end{eqnarray}
where  $c^*>0$ is a suitable large constant to be fixed later.
We now check that the average degree $t_{2i}^{2i-1}$ of the subhypergraph 
$\cG[U]$ with hyperedges of size $2i$ satisfies~\eqref{eq:main_condition}, that is, 
\begin{equation}\label{eq:cond_T}
t_{2i}^{2i-1}\leq T^{2i-1} \cdot (\ln T)^{\frac{2\ell - 2i}{2\ell - 1}}.
\end{equation}  
First, observe that
\begin{equation*}
	t_{2i}^{2i-1} \leq 8\ell c u_in^{i-1}p^{2i-1}=8\ell c (u_ip)(np^2)^{i-1}\stackrel{\eqref{eq:aux}}{=}8\ell c \omega^{2i-1}(\ln n)^{\frac{2(s-i)}{2s-1}}.
\end{equation*}
On the other hand, since $u_s\geq n^{1/2+\varepsilon}$, we have 
$\ln T \geq c' \ln n$ for some constant $c'=c'(\ell,\varepsilon) > 0$,
and hence,
\begin{equation*}
T^{2i-1} \cdot (\ln T)^{\frac{2\ell - 2i}{2\ell - 1}}\geq (c^*)^{2i-1}(c')^{\frac{2\ell - 2i}{2\ell - 1}}\omega^{2i-1}(\ln n)^{\frac{2(s-i)}{2s-1}}.
\end{equation*}
 With a suitable large choice of $c^*=c^*(\ell,\epsilon)>0$, depending on $c$ and $c'$, we infer~\eqref{eq:cond_T}.

Theorem~\ref{theo.main-new} implies 
that
there exist  positive constant $c_{\ell}$, depending only on $\ell$, such that
\begin{equation*}
	\alpha(\cG)\geq c_{\ell}\frac{np}{T}(\ln T)^{\frac{1}{2\ell-1}}=
	c_3 \(\frac{n^s}{u_s}{\ln n}\)^{\frac{1}{2s-1}},  
\end{equation*}
 where $c_3 > 0$ is a constant, depending only on $\ell$ and $\varepsilon$, 
which completes the proof of (ii).
\end{proof}

\begin{proof}[Proof of the upper bound  of Theorem~\ref{thm:application_a}]
We will show the following using some ideas 
from~~\cite{ALR91} and~~\cite{Ba85}:
For each integer $k$ with $2\leq k \leq \ell$, 
there exists a positive constant $C=C(k)$
such that for every sufficiently large $n$,
\begin{equation}\label{sak1}
 f(n;u_2,...,u_{\ell})  \leq C  \left(\frac{n^k}{u_k}
  \ln n \right)^{\frac{1}{2k-1}}.
\end{equation}
This with $k=s$ implies the upper bound in Theorem~\ref{thm:application_a}.

For a proof of~\eqref{sak1}, let $\cH_k$ be the subhypergraph of $\cH$ on the vertex set $V$ only with all $k$-element hyperedges. We define a random edge-coloring of $\cH_k$ as follows. Set 
$$m=  c_0  \frac{n^{k}}{u_k},$$ where $c_0$ is a 
 constant with $0\leq c_0\leq 1/(8e^2 (k!) )$. (We ignore divisibility constraints in our arguments, as there is enough room in the calculations.) 
Let $M_{1}, \ldots,M_{m}$ be random matchings 
chosen uniformly and independently from the set of all matchings of size $u_k$
on $V$, and
let $U_0=\emptyset$ and $U_{i}=\bigcup_{j\leq i}M_{j}$ for $i=1,\dots, m$. We color all hyperedges in $M_i 
\setminus U_{i-1}$ by color $i$, and
color the remaining ones with distinct new colors.

In order to prove~\eqref{sak1}, it suffices to show that, for 
$x =C  \left( \frac{n^k}{u_k} 
\ln n \right)^{\frac{1}{2k-1}}$, where $C > 0$ is a suitable
constant to be fixed later, we have
\begin{equation*}\label{eq:upper_goal}
\Pr\Big[ \exists X\subset V \mbox{ such that } |X|=x \mbox{ and } X \mbox{ is totally multicolored } \Big] =o(1).
\end{equation*}
For its proof, let $X \subseteq V$ be an arbitrarily fixed subset of $V$ of size $x$.
We will show that $X$  
is totally multicolored with very small probability. For $i=1,\dots,m$, let $Y_i$ be the number of pairs $\{S,T\}$ of hyperedges in $M_i\setminus U_{i-1}$ contained in $X$.
Observe that $X$ is totally multicolored if and only if 
simultaneously $Y_i=0$ for $i=1,\ldots, m$. 
We can show that  $Y_i=0, \; i=0,\ldots,m,$ hold with very small probability under the condition that the intersection size $|U_{m} \cap [X]^{k}|$ is small, where $[X]^k$ denotes the family of all $k$-element hyperedges in $X$. To this end, let $A$ be the event that $|U_{m} \cap [X]^{k}|\leq c_1 x^k$ where $c_1=1/( 4(k!))$. We have the following:
 \begin{eqnarray} \label{eq4}
\Pr\left[X \mbox{ is totally multicolored }\right] &=& \Pr\left[Y_1=0,\dots, Y_m=0\right] \nonumber \\
&\leq & \Pr[A^c]+\Pr\left[Y_1=0,\dots, Y_m=0, A\right] \nonumber \\
 &\leq & \Pr[A^c]+\Pr\left[Y_1=0,\dots, Y_m=0 \; | \; A\right].
 \end{eqnarray}

  We will use the following two claims:

\begin{claim}\label{clm:first}
For every sufficiently large $n$, \begin{eqnarray} \label{up2} \Pr \left[ A^c\right] \leq \exp\(-c_1  x^{k}\) \; .\end{eqnarray}

\end{claim}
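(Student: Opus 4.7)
The plan is to upgrade the first-moment estimate of $|U_m\cap [X]^k|$ into an exponential tail bound via a union bound over all possible ``excessive covering'' subfamilies $T\subseteq [X]^k$ of size $\lceil c_1 x^k\rceil$, using the mutual independence of $M_1,\dots,M_m$. By symmetry, a uniform random matching of size $u_k$ on $V$ contains any fixed $k$-subset with probability $u_k/\binom{n}{k}$, so linearity gives
\[
\EE\bigl[|U_m\cap [X]^k|\bigr]\;\leq\; m\,u_k\,\binom{x}{k}\big/\binom{n}{k}\;=\;c_0\,k!\,x^k\,(1+o(1)),
\]
which sits well below $c_1 x^k=x^k/(4k!)$ by the hypothesis $c_0\leq 1/(8e^2 k!)$. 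Hence the event $A^c$ demands a substantial overshoot of the mean, and the task is to quantify this overshoot exponentially.

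For the tail I would write
\[
\Pr[A^c]\;\leq\;\binom{\binom{x}{k}}{t}\,\max_{|T|=t}\Pr[T\subseteq U_m],\qquad t:=\lceil c_1 x^k\rceil,
\]
and decompose the event $\{T\subseteq U_m\}$ along assignments $\phi\colon T\to[m]$ that record, for each $S\in T$, some index $j$ with $S\in M_j$. By independence of the $M_j$,
\[
\Pr[T\subseteq U_m]\;\leq\;\sum_{\phi\colon T\to[m]}\;\prod_{j=1}^m\Pr\bigl[\phi^{-1}(j)\subseteq M_j\bigr].
\]
For a prescribed $t_j$-matching $T_j=\phi^{-1}(j)$, a direct count of matchings of size $u_k$ on $V$ containing $T_j$ (and $0$ if $T_j$ is not itself a matching) yields
\[
\Pr[T_j\subseteq M_j]\;=\;\frac{u_k(u_k-1)\cdots(u_k-t_j+1)}{\binom{n}{k}\binom{n-k}{k}\cdots\binom{n-(t_j-1)k}{k}}\;\leq\;\Bigl(\tfrac{u_k\,k!}{n^k}\Bigr)^{t_j}(1+o(1))^{t_j},
\]
using $\binom{n-ik}{k}\sim n^k/k!$ uniformly in $i$ with $ik=o(n)$.

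Summing over $\phi$ gives $\Pr[T\subseteq U_m]\leq (mu_kk!/n^k)^t(1+o(1))^t=(c_0 k!)^t(1+o(1))^t$. Combining with $\binom{\binom{x}{k}}{t}\leq (e\binom{x}{k}/t)^t\leq \bigl(e/(c_1 k!)\bigr)^t=(4e)^t$, I obtain
\[
\Pr[A^c]\;\leq\;(4e\,c_0\,k!)^t(1+o(1))^t\;\leq\;(1/(2e))^t(1+o(1))^t\;\leq\; e^{-t}\;\leq\;\exp(-c_1 x^k)
\]
for $n$ sufficiently large, which is exactly \eqref{up2}; the choice $c_0\leq 1/(8e^2 k!)$ provides precisely the slack needed to absorb the combinatorial factor $4e$ and the $(1+o(1))$ loss. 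The main technical obstacle is the uniform control of the $(1+o(1))^t$ factor in the matching-probability estimate: the approximation $\binom{n-ik}{k}\sim n^k/k!$ demands $ik=o(n)$, hence $kt=o(n)$, i.e.\ $x^k=o(n)$. This is available in the regime of Theorem~\ref{thm:application_a} (in particular under~\eqref{cond3_a}); in borderline cases one would replace the denominator uniformly by $\binom{n-ku_k}{k}$ and absorb the resulting multiplicative loss into the slack left in the constant $c_0$.
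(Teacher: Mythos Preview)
Your argument is correct and reaches the same bound, but it is organised differently from the paper's. The paper does not fix a witnessing family $T\subseteq[X]^k$; instead it uses $|U_m\cap[X]^k|\le\sum_{i=1}^m |M_i\cap[X]^k|$, decomposes the tail along compositions $(t_i)_{i=1}^m$ with $\sum_i t_i=t$, and bounds each factor by the clean symmetry estimate
\[
\Pr\bigl[|M_i\cap[X]^k|\ge t_i\bigr]\;\le\;\binom{u_k}{t_i}\,\frac{\binom{x}{kt_i}}{\binom{n}{kt_i}}\;\le\;\Bigl(\frac{u_kx^k}{n^k}\Bigr)^{t_i},
\]
the point being that the $kt_i$ vertices covered by any $t_i$ fixed edges of a uniform random matching are a uniform $kt_i$-subset of $V$. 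Summing over compositions yields the factor $\binom{t+m-1}{t}$ in place of your $\binom{\binom{x}{k}}{t}\cdot m^t$, and the constants match up identically. The paper's route has the minor advantage that no $(1+o(1))$ correction appears at all.

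One remark on your ``main technical obstacle'': the worry that you need $kt=o(n)$, i.e.\ $x^k=o(n)$, is misplaced. For any $\phi$ contributing a nonzero term, each $T_j=\phi^{-1}(j)$ must itself be a matching inside $X$, so $t_j\le x/k$; hence every index $i$ in your denominator satisfies $ik<x$. Since $x=C\bigl(n^k\ln n/u_k\bigr)^{1/(2k-1)}\le C\,n^{k/(2k-1)}(\ln n)^{1/(2k-1)}=o(n)$ unconditionally, each ratio $(n^k/k!)/\binom{n-ik}{k}$ is $1+O(x/n)$ uniformly, and the accumulated factor is $\exp(O(tx/n))=\exp(o(t))$, absorbed by your $1/(2e)$ slack. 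No extra hypothesis such as~\eqref{cond3_a} is needed, and the proposed fallback via $\binom{n-ku_k}{k}$ should be discarded (it can vanish when $ku_k=n$).
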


\begin{claim}\label{clm:second} For every  sufficiently large $n$,
\begin{eqnarray} \label{zu1}
\Pr\left[Y_1=0,\dots, Y_m=0 \; | \; A\right] \leq \exp\left(-\frac{c_0c_1^2}{4}  
\frac{u_k x^{2k}}{n^{k}}\right).
\end{eqnarray}
\end{claim}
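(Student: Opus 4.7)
The plan is to process the matchings $M_1,\ldots,M_m$ sequentially, exploiting the event $A$ to guarantee that at every step many $k$-subsets of $X$ remain ``free''. Work with the natural filtration $\mathcal F_{i-1}:=\sigma(M_1,\ldots,M_{i-1})$, and introduce the nested, $\mathcal F_j$-measurable events $A_j:=\{\,|U_j\cap [X]^k|\leq c_1 x^k\,\}$, so that $A=A_m\subseteq A_{m-1}\subseteq\cdots\subseteq A_0$. On $A_{i-1}$, the ``free'' family $\mathcal G_{i-1}:=[X]^k\setminus U_{i-1}$ has size at least $\binom{x}{k}-c_1 x^k\geq (3/4)\binom{x}{k}(1-o(1))$, where we used $c_1=1/(4(k!))$.

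The core per-step estimate I would establish is
\[
\Pr\bigl[\,Y_i=0\mid\mathcal F_{i-1}\,\bigr]\ \leq\ 1-\delta\qquad\text{on }A_{i-1},\qquad \delta\ :=\ \frac{c_1^{\,2}}{4}\cdot\frac{u_k^{\,2}x^{2k}}{n^{2k}}.
\]
To prove it, count the vertex-disjoint pairs $\{S,T\}\subseteq\mathcal G_{i-1}$: after subtracting the $O(x^{2k-1})$ pairs of $k$-subsets of $X$ sharing a vertex, at least $x^{2k}/(4(k!)^2)(1-o(1))$ such pairs survive. Each is contained in the uniform random matching $M_i$ of size $u_k$ with probability $u_k(u_k-1)/\bigl(\binom{n}{k}\binom{n-k}{k}\bigr)\geq (k!)^2 u_k^{\,2}/n^{2k}\,(1-o(1))$, giving $\EE[Y_i\mid\mathcal F_{i-1}]\geq u_k^{\,2}x^{2k}/(4 n^{2k})(1-o(1))$. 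A Paley--Zygmund (second-moment) argument then converts this expectation bound into the required lower bound on $\Pr[Y_i\geq 1\mid\mathcal F_{i-1}]$.

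With the per-step bound in hand, iteration via the tower property closes the argument. Since $A\subseteq A_{m-1}$,
\begin{align*}
\Pr[Y_1=\cdots=Y_m=0,\,A]
&\leq \Pr[Y_1=\cdots=Y_m=0,\,A_{m-1}] \\
&=\EE\Bigl[\Pr[Y_m=0\mid\mathcal F_{m-1}]\cdot\mathbf{1}_{\{Y_1=\cdots=Y_{m-1}=0\}\cap A_{m-1}}\Bigr] \\
&\leq (1-\delta)\,\Pr[Y_1=\cdots=Y_{m-1}=0,\,A_{m-2}],
\end{align*}
using $A_{m-1}\subseteq A_{m-2}$ in the last step; repeating $m$ times yields $(1-\delta)^m\leq e^{-m\delta}$. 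Substituting $m=c_0 n^k/u_k$ produces the exponent $-(c_0c_1^{\,2}/4)\,u_k x^{2k}/n^k$, and passing from $\Pr[\,\cdot\,,A]$ to $\Pr[\,\cdot\mid A]$ costs only a factor $1/\Pr[A]\leq 2$ by Claim~\ref{clm:first}, which can be absorbed into the constants.

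The main obstacle is the Paley--Zygmund step, that is, upper-bounding $\EE[Y_i^{\,2}\mid\mathcal F_{i-1}]$ in order to conclude $\Pr[Y_i\geq 1]\gtrsim \EE[Y_i]$. This requires classifying ordered pairs of vertex-disjoint edge-pairs $(\{S,T\},\{S',T'\})$ by how the four $k$-edges overlap in $V$ and estimating the joint matching probability $\Pr[S,T,S',T'\subseteq M_i]$. Because all edges of $M_i$ are mutually vertex-disjoint, this probability vanishes unless the four edges are pairwise disjoint, and the surviving cases factor up to the usual matching corrections, yielding $\EE[Y_i^{\,2}]\leq \EE[Y_i]^{\,2}+O(\EE[Y_i])$, which is exactly what the second-moment method needs.
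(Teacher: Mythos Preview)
Your argument is correct and shares the paper's overall architecture (process $M_1,\ldots,M_m$ sequentially, establish a per-step bound $\Pr[Y_i=0]\le 1-\delta$, then iterate), but it diverges from the paper in two places worth noting. First, on the conditioning: the paper applies the chain rule directly under the global event $A=A_m$, which depends on \emph{all} matchings $M_1,\ldots,M_m$; strictly speaking this biases the conditional law of $M_i$, so the paper's use of the unconditional estimates for $\Pr[S,T\in M_i]$ and $\Pr[|M_i\cap[X]^k|\ge t]$ is a bit informal. Your device of replacing $A$ by the $\mathcal F_{i-1}$-measurable supersets $A_{i-1}\supseteq A$ and bounding $\Pr[\,\cdot\,,A]$ before dividing by $\Pr[A]$ is the clean way to make this step rigorous. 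Second, for the lower bound on $\Pr[Y_i\ge 1]$: the paper avoids the second-moment computation altogether by using the identity $\Pr[Y_i\ge 1]=\EE[Y_i]-\sum_{j\ge 2}\Pr[Y_i\ge j]$ together with the tail bound $\Pr[Y_i\ge j]\le\Pr\bigl[|M_i\cap[X]^k|\ge\lceil\sqrt{2j+1}\,\rceil\bigr]\le(u_kx^k/n^k)^{\lceil\sqrt{2j+1}\,\rceil}$, whereas you go through Paley--Zygmund. Both routes succeed here because $\EE[Y_i]=o(1)$; the paper's tail-sum trick is slightly more elementary (no case analysis of overlapping quadruples), while your approach is the more standard one and, together with your filtration set-up, yields a fully rigorous proof with the stated constant.
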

The union bound and Claims~\ref{clm:first} and \ref{clm:second}  together with~\eqref{eq4} yield that
\begin{eqnarray} \label{up5}
&& \Pr \Big[\exists X \subset V \mbox{ such that } |X|= x  \mbox{ and $X$ totally multicolored} \Big]  \nonumber \\
&\leq& \binom{n}{x} \left( \exp \left(-\frac{c_0c_1^2}{4}    \frac{u_kx^{2k}}{n^{k}}
\right)
+\exp\(-c_1x^{k}\)
\right) 
\nonumber 
\\ 
&\leq& \exp \left( x \ln n \right)\cdot \left( \exp \left(-\frac{c_0c_1^2}{4}   \frac{u_kx^{2k}}{n^{k}}
\right)
+\exp\(-c_1x^{k}\)\right) .
\end{eqnarray}
 By choosing the constant   $C > (4/(c_0 c_1^2))^{\frac{1}{2k-1}} $, the term
 (\ref{up5}) goes to $0$ as $n$ tends to $\infty$, which completes our proof of~\eqref{eq:upper_goal}. It  remains to prove Claims~\ref{clm:first} and ~\ref{clm:second}. 

First, we prove Claim~\ref{clm:first}.
Since
$|U_{m} \cap [X]^{k}| \leq \sum_{i=1}^{m} | M_{i} \cap [X]
^{k}|$ and the events $|M_{i} \cap [X]^{k}| \geq t_{i}$, $i= 1,  \ldots ,
m$, are independent,
 we infer that
\begin{eqnarray*}
 \Pr \left[ | U_{m} \cap \lbrack X\rbrack^{k}  | > t
\right] 
\leq   \Pr \left[ \sum_{i=1}^{m} |M_{i} \cap [X]^{k} | 
\geq t \right] 
\leq  \sum_{(t_{i})_{i=1}^{m} \mbox{ \tiny{s.t.} } \atop t_{i} \geq 0, \sum_{i=1}^{m} t_{i} = t}
\prod_{i=1}^{m} \; \Pr \left[ |M_{i} \cap [X]^{k}|  \geq 
t_{i} \right].
\end{eqnarray*}
Now we estimate $\Pr\left[|M_{i} \cap [X]^{k}| \geq t_i\right]$
for integers $1 \leq i \leq m$ and $t_i\geq 0 $. There are 
$\binom{u_k}{t_i}$ choices for selecting $t_i$ hyperedges in $M_i$. Then the $t_i$ hyperedges are contained in $X$ with probability 
$\binom{x}{kt_i}/\binom{n}{kt_i}$, hence
\begin{eqnarray}\label{up1}
\Pr \left[ |M_{i} \cap [X]^{k}| \geq t_i \right]  \leq  
\binom{u_k}{t_i}\frac{ \binom{x}{kt_i}}{\binom{n}{kt_i}} 
 \leq  \left( \frac{u_k x^{k}}{n^{k}} \right)^{t_i}  . 
\end{eqnarray}
Thus, we infer that
\begin{eqnarray*}
 \Pr \left[ | U_{m} \cap \lbrack X\rbrack^{k}  | > t
\right] 
&\leq &   \sum_{(t_{i})_{i=1}^{m} \mbox{ \tiny{s.t.} } \atop t_{i} \geq 0, \sum_{i=1}^{m} t_{i} = t} \prod_{i=1}^{m} \left( \frac{u_k  x^k
}{n^{k}} \right)^{t_{i}} \nonumber 
\leq \binom{t + m -1}{t}  \left( \frac{u_k  x^{k}}{
n^{k}} \right)^t \\
&\leq & \left( \frac{e(t+m)}{t} \right)^{t} 
\left( \frac{u_k  x^{k}}{ n^{k}} \right)^{t} 
\leq \left( \frac{ e   (t + m)  u_k  x^{k}}{
t  n^{k}} \right)^{t} .
\end{eqnarray*}
Take $t = c_1  x^k$ 
and note that $t =o(m)$. Consequently,
\begin{equation*}
 \Pr \left[ | U_{m} \cap \lbrack X\rbrack^{k}  | > c_1  x^k
\right] \leq \left( \frac{ 2em  u_k  x^{k}}{
c_1  x^k n^{k}} \right)^{c_1  x^k}=\left(\frac{2ec_0}{c_1}\right)^{c_1  x^k}\leq e^{-c_1  x^k},
\end{equation*}
where the last inequality follows from $0\leq c_0\leq 1/(8e^2 (k!) )$ and 
$c_1=1/(4 (k!) )$.
This completes our proof of Claim~\ref{clm:first}.

Next, we prove Claim~\ref{clm:second}. 
First, we have that
\begin{eqnarray} \label {k1}
\Pr \Big[Y_1=0,\dots, Y_m=0 \;\Big|\; A\Big]
= \prod_{i=1}^{m} \Pr \Big[Y_i=0 \; \Big|
\; A, Y_j=0 \mbox{ for } 1\leq j\leq i-1\Big].
\end{eqnarray}
For simplification, let $B_{i}$ denote the event that $A$ happens and $Y_j=0 \mbox{ for } 1\leq j\leq i$. Next we upper bound $\Pr [Y_i=0 \; |
\; B_{i-1}]$, or equivalently, we lower bound 
$\Pr [Y_i\geq 1 \; | \; B_{i-1}]$.

To this end, it is useful to consider $\EE[Y_i \;|\; B_{i-1}]$.
The condition  that $A$ holds implies $|U_{i-1} \cap [X]^{k}| \leq  c_1  x^{k}$, that is, 
$$|\lbrack X\rbrack^k \setminus U_{i-1}| \geq \binom{x}{k} - c_1  x^k 
\geq \(\frac{1}{2(k!)}-c_1\)x^k=c_1x^k.$$
For each hyperedge $S \in \lbrack X\rbrack^k$ there are at most
 $k \binom{x-1}{k-1}$ hyperedges in $X$ which are not disjoint from $S$. Hence, the
number of pairs  $\{ S, T\} \in \lbrack \lbrack X\rbrack^k \setminus U_{i-1} 
\rbrack^2$ of hyperedges with $S \cap T = \emptyset$ 
is,  for  every sufficiently large $n$,  at least 
\begin{eqnarray*} \label{e9}
\frac{1}{2} c_1x^k 
 \left( c_1  x^k - k  \binom{x-1}{k-1}
\right) > \frac{c_1^2x^{2k}}{3}.
\end{eqnarray*}
For disjoint hyperedges $S$ and $T$, 
\begin{eqnarray*} \label{e10}
\Pr \left[ S, T \in M_i \right]
 &=& \frac{u_k  (u_k-1)}{\binom{n}{k} 
\binom{n-k}{k}} 
\geq \frac{u_k^2}{n^{2k}},
\end{eqnarray*}
and therefore 
 \begin{eqnarray} \label{ot1}
\EE [Y_{i} \; | \; B_{i-1}] &>& \frac{c_1^2}{3} \Big(\frac{u_k  x^{k}}{n^{k}}\Big)^2. 
\end {eqnarray}

Now we estimate $\Pr [Y_i\geq 1 \; | \; B_{i-1}]$ by using $\EE[Y_i \;|\; B_{i-1}]$. We have that
\begin{eqnarray}\label{eq:Y_i}
\Pr [Y_i\geq 1 \; | \; B_{i-1}] &=& \EE[Y_i \;|\; B_{i-1}] -\sum_{j\geq 2}(j-1)\Pr [Y_i=j \; | \; B_{i-1}] \nonumber \\
&=&\EE[Y_i \;|\; B_{i-1}] -\sum_{j\geq 2}\Pr [Y_i\geq j \; | \; B_{i-1}].
\end{eqnarray}
Observe that  for $j$ pairwise distinct two-element
sets, the underlying set has cardinality
at least $\lceil \, \sqrt{2j + 1} \, \rceil$. Hence, 
\begin{eqnarray} \label{ps1}
\Pr [Y_{i} \geq j  \; | \;  B_{i-1}] \leq
\Pr\left[ | M_i \cap \lbrack X\rbrack^{k} | \geq
\lceil \,\sqrt{2j + 1} \, \rceil \right] \stackrel{\eqref{up1}}{\leq} 
\left(\frac{u_k  x^{k}}{n^{k}}\right)^{\lceil
\, \sqrt{2j + 1}
\,
\rceil}.
\end{eqnarray}
Consequently, it follows from~\eqref{ot1}--\eqref{ps1} and  $x^k = o\(n^k/u_k\)$ that
\begin{equation*}
\Pr [Y_i\geq 1 \; | \; B_{i-1}]\geq \frac{c_1^2}{4}  \(\frac{u_k  x^{k}}{n^{k}}\)^2,
\end{equation*}
that is,
\begin{equation*}
\Pr [Y_i=0 \; | \; B_{i-1}]\leq 1- \frac{c_1^2}{4} \( \frac{u_k  x^{k}}{n^{k}}\)^2.
\end{equation*}

Therefore,~\eqref{k1} gives that
\begin{eqnarray*}
\Pr \Big[Y_1=0,\dots, Y_m=0 \;\Big|\; A\Big]
& \leq & \left(1 - \frac{c_1^2}{4} \frac{u_k^2  x^{2k}}{n^{2k}}
\right)^{m} 
\leq \exp\left(- \frac{c_1^2}{4}   
\frac{u_k^2  x^{2k}m}{n^{2k}}\right) 
\leq \exp\left(-   \frac{c_0c_1^2}{4} 
\frac{u_k  x^{2k}}{n^{k}}\right),
\end{eqnarray*}
which completes our proof of Claim~\ref{clm:second}.
\end{proof}

\end{document}